\theoremstyle{plain}
\newtheorem{theorem}{Theorem}[section]
\newtheorem{definition}[theorem]{Definition}
\newtheorem{lemma}[theorem]{Lemma}
\newtheorem{proposition}[theorem]{Proposition}
\newtheorem{corollary}[theorem]{Corollary}
\newtheorem{remark}[theorem]{Remark}
\newtheorem{example}[theorem]{Example}
\newtheorem{question}[theorem]{QUESTION}
\newtheorem{remark-question}[section]{Remark-Question}
\newtheorem{conjecture}[section]{Conjecture}
\newcommand\C{{\mathbb C}}
\newcommand\CP{{\mathbb {CP}}}
\newcommand\R{{\mathbb R}}
\newcommand\Z{{\mathbb Z}}
\newcommand\T{{\mathbb T}} \newcommand\Proj{{\mathbb P}}
\newcommand\frg{{\mathfrak g}}
\newcommand\frh{{\mathfrak h}}
\newcommand\Real{{\mathfrak R}{\frak e}\,} 
\newcommand\db{{\bar{\partial}}}
\newcommand\zzz{{\!\!\!}}
\begin{document}
\title[]{On generalized Gauduchon metrics}

\subjclass[2000]{
}

\author{Anna Fino}
\address[Fino]{Dipartimento di Matematica\\
 Universit\`a di Torino\\
Via Carlo Alberto 10\\
10123 Torino, Italy} \email{annamaria.fino@unito.it}

\author{Luis Ugarte}
\address[Ugarte]{Departamento de Matem\'aticas\,-\,I.U.M.A.\\
Universidad de Zaragoza\\
Campus Plaza San Francisco\\
50009 Zaragoza, Spain} \email{ugarte@unizar.es}

\maketitle

\begin{abstract} We study  a class of Hermitian metrics on complex manifolds, recently introduced by J. Fu, Z. Wang
 and  D. Wu,  which are a generalization of Gauduchon metrics. This class includes the one of Hermitian metrics for which the associated fundamental $2$-form is $\partial \overline \partial$-closed. Examples are  given on nilmanifolds, on products of Sasakian manifolds, on $S^1$-bundles and via the twist construction introduced by A. Swann.
\end{abstract}


\section{Introduction}

Let $(M,J,g)$ be a Hermitian manifold of real  dimension $2n$.
If the fundamental $2$-form $\Omega (\cdot, \cdot) = g(J \cdot, \cdot)$ is $d$-closed, then the metric
$g$ is K\"ahler. In the literature, weaker conditions on $\Omega$ have been studied and they involve
the closure with respect to the $\partial \overline \partial$-operator of the
$(k , k)$-form $\Omega^k = \Omega \wedge \cdots \wedge\Omega$.

If $\partial \overline \partial \Omega =0$, then the
Hermitian structure $(J, g)$ is said to be {\em strong K\"ahler with torsion} and
$g$ is called {\em SKT}  (or  also {\em pluriclosed} )(see e.g. \cite{GHR}). In this case the Hermitian structure is characterized
by the condition that torsion $3$-form $c = J d \Omega$ of the   Bismut connection is $d$-closed.
SKT metrics have been recently studied by many
authors and they have also applications in type II string theory
and in 2-dimensional supersymmetric $\sigma$-models
\cite{GHR,Str, IP}. Moreover, they have also links with generalized
K\"ahler structures (see for instance
\cite{GHR,Gu,Hi2,AG}). New simply-connected SKT examples have been
constructed by A. Swann in \cite{Sw} via the twist construction, by reproducing the $6$-dimensional examples
found previously in \cite{GGP}. Recently, Streets and Tian introduced a Hermitian Ricci flow under which the SKT condition is preserved \cite{ST}.

If $\partial \overline \partial \Omega^{n - 2} =0$, then the Hermitian
metric $g$ on $M$ is said to be {\em astheno-K\"ahler}. Jost and Yau used this condition in \cite{JY} to study Hermitian harmonic maps and to extend Siu's rigidity theorem to non-K\"ahler manifolds.

On a complex surface any Hermitian metric is automatically astheno-K\"ahler and in
complex dimension $n =3$ the notion of astheno-K\"ahler metric
coincides with that one of SKT. Astheno-K\"ahler structures on  products of Sasakian manifolds and then in particular on Calabi-Eckmann manifolds have been constructed in \cite{M}.
For $n > 3$ other examples of astheno-K\"ahler manifolds have been found  in \cite{FT} via the twist construction  \cite{Sw}, blow-ups and resolutions of orbifolds.

In \cite{FT2} it was shown that the blow-up of an  SKT  manifold $M$ at
a point or along a compact complex submanifold $Y$ is still SKT, as in the K\"ahler case
(see for example \cite{Bl}). Moreover,  by \cite{FT}  the same property holds for the blow-ups of complex manifolds
endowed with a $J$-Hermitian metric such that
$\partial\overline{\partial}\,\Omega =0$ and $\partial\overline{\partial}\,\Omega^2=0$.

If $\Omega^{n-1}$ is
$\partial \overline\partial$-closed or equivalently if the Lee form is co-closed, then the Hermitian metric $g$ is
called {\emph {standard}} or a {\em Gauduchon metric} \cite{Gau}.
Recently, in \cite{FWW} J. Fu,  Z. Wang and D. Wu introduced a generalization of the Gauduchon metrics on complex manifolds. Let  $(M, J)$ be a   complex manifold $M$ of complex dimension $n$ and let $k$  be an integer such that $1 \leq k \leq n-1$,  a $J$-Hermitian metric $g$ on $M$  is called {\em $k$-th Gauduchon} if
\begin{equation} \label{defk-Gaud}
\partial \overline \partial \Omega^k  \wedge \Omega^{n -k-1} =0.
\end{equation}
Then, by definition  the notion of  $(n-1)$-th Gauduchon metric coincides with the one of the usual Gauduchon metric. In \cite{FWW} it is  associated   to any Hermitian structure  $\Omega$  on a complex manifold $(M, J)$ a unique constant $\gamma_k (\Omega)$, which is invariant by biholomorphisms and which depends smoothly on $\Omega$. It is also proved that $\gamma_k (\Omega) =0$ if and only if there exists a $k$-th Gauduchon metric in the conformal class of $\Omega$. Moreover, the first examples of $1$-st Gauduchon metrics have been constructed in \cite{FWW} on the three-dimensional complex manifolds constructed by Calabi and on $S^5\times S^1$.

In this paper we will study $1$-st Gauduchon metrics, i.e.  Hermitian metrics for which $$
\partial \overline \partial \Omega  \wedge \Omega^{n -2} =0.
$$
This class of  metrics includes the SKT ones as particular case, so it is natural to study  which properties that hold for the SKT metrics are still valid in the case of $1$-st Gauduchon ones.

Astheno-K\"ahler and SKT metrics on compact complex manifolds cannot be balanced for $n > 2$ unless they are
K\"ahler (see \cite{MT,AI}), where by balanced one means that
the Lee form vanishes.  If the Lee form is exact, then the  Hermitian structure is called conformally balanced.
By \cite{IP,P, FT} a conformally balanced SKT (or astheno-K\"ahler) structure
on a compact manifod of complex dimension $n$ whose Bismut connection has (restricted)
holonomy contained in $SU(n)$ is necessarily K\" ahler.
In Section~\ref{sectionbalanced} we prove similar results for $1$-st Gauduchon metrics.

In complex dimension $3$ invariant SKT structures on {\em nilmanifolds}, i.e. on compact quotients
of nilpotent Lie groups by uniform discrete subgroups, were studied
in \cite{FPS,U} showing that the existence of such a
structure depends only on the left-invariant complex structure on the Lie group and that the Lie group is $2$-step.  Nilmanifolds of any even dimension  endowed with a left-invariant complex structure $J$ and an SKT $J$-Hermitian metric were studied in \cite{EFV} showing that they are all $2$-step and they are indeed the total space of holomorphic principal torus  bundles over complex tori.

In Section \ref{nilmanifolds} we  prove that on a $6$-dimensional nilmanifold endowed with an invariant complex structure $J$, an invariant $J$-Hermitian metric is $1$-st Gauduchon if and only if it is SKT.  However, by using the results in \cite{FWW} we show that there are complex $6$-dimensional nilmanifolds admitting non-invariant $1$-st Gauduchon metrics. A family   $J_t$, $t \in (0,1]$,  of  complex structures admitting compatible  $1$-st Gauduchon metrics for any $t$    and such  that $J_t$ admits SKT metrics only for $t =1$  is also given on a $6$-dimensional nilmanifold. Moreover, we show that this family  of complex structures on the previous nilmanifold   does not have any compatible  (invariant or not) balanced metric.

 The situation is different in higher dimension since  in Section \ref{products} we give examples in dimension 8 of complex nilmanifolds admitting invariant $1$-st Gauduchon metrics and no SKT metrics. These examples are products of (quasi)-Sasakian manifolds. We construct $1$-st Gauduchon metrics on $S^1$-bundles  over  quasi-Sasakian manifolds and on the product of two Sasakian manifolds endowed with  the complex structure introduced in \cite{M}.
In Section \ref{blow-ups}  we  study the existence of $1$-st Gauduchon metrics  on the  blow-up of a complex manifold at a point or along a compact submanifold. In the last section,  by using the twist construction \cite{Sw}, we find  examples of  simply-connected $6$-dimensional compact complex manifolds endowed with $1$-st Gauduchon metrics.

\section{$1$-st Gauduchon metrics and relation with the balanced condition} \label{sectionbalanced}

In this section we show that $1$-st Gauduchon metrics, as the SKT ones, are complementary to the balanced condition.
Let  us  start to review the definition and the properties of the $k$-th Gauduchon metrics contained  in \cite{FWW}.

We recall the following
\begin{definition} \cite{FWW}  Let  $(M, J)$ be a complex manifold  of complex dimension $n$ and let $k$ be an integer such that $1 \leq k \leq n -1$. A $J$-Hermitian metric  $g$   on $(M, J)$  is called {\em $k$-th Gauduchon}  if  its fundamental $2$-form $\Omega$ satisfies the condition \eqref{defk-Gaud}.
\end{definition}

For $k = n-1$ one gets the classical standard  metric. Moreover, according to  the previous definition, an SKT metric is a $1$-st Gauduchon metric and an astheno-K\"ahler metric is a $(n-2)$-th Gauduchon metric.

Extending the result proved by Gauduchon in \cite{Gau} for standard metrics  it is shown in \cite{FWW} that if $(M,J, g, \Omega)$ is a $n$-dimensional compact Hermitian manifold, then for any integer $1 \leq k \leq n - 1$, there exists a unique constant $\gamma_k (\Omega)$ and a   (unique up to a constant) function $v \in {\mathcal C}^{\infty} (M)$  such that
$$
\frac{i}{2} \partial \overline \partial (e^v \Omega^k) \wedge \Omega^{n - k -1} = \gamma_k (\Omega) e^v \Omega^n.
$$
If $(M,J, g, \Omega)$ is K\"ahler, then $\gamma_k(\Omega) =0$ and $v$ is a constant function for any $1 \leq k \leq n - 1$.

The constant $\gamma_k(\Omega)$ is invariant under biholomorphisms and by \cite[Proposition 11]{FWW} the sign of  $\gamma_k (\Omega)$ is invariant in the conformal class of $\Omega$.

To compute the sign of 
the constant $\gamma_k(\Omega)$ one can use the following
\begin{proposition} \cite{FWW} For a Hermitian structure $(J, g, \Omega)$ on an $n$-dimensional complex manifold $(M, J)$, the number $\gamma_k (\Omega)$ is $>0$ ($= 0$, or $< 0$) if and only if there exists a metric $\tilde g$  in the conformal class of $g$ such that
$$
\frac{i}{2} \partial \overline \partial  \tilde \Omega^k \wedge \tilde \Omega^{n - k - 1} > 0  \, (=0, \, {\mbox {or}} \, < 0),
$$
where $\tilde  \Omega$ is the   fundamental $2$-form associated to $(J, \tilde g)$.
\end{proposition}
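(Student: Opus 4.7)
The plan is to recast both sides of the equivalence as sign conditions for a single second-order elliptic operator, and then to compare them by pairing with an adjoint eigenfunction. For any conformal representative $\tilde\Omega = \phi\,\Omega$ with $\phi$ a smooth positive function on $M$, the identity
$$
\frac{i}{2}\partial\overline\partial\tilde\Omega^k\wedge\tilde\Omega^{n-k-1} = \phi^{n-k-1}\cdot\frac{i}{2}\partial\overline\partial(\phi^k\,\Omega^k)\wedge\Omega^{n-k-1}
$$
holds pointwise, simply because $\phi^{n-k-1}$ is a $0$-form. Defining the linear operator $L$ on smooth functions by $L(u)\,\Omega^n := \frac{i}{2}\partial\overline\partial(u\,\Omega^k)\wedge\Omega^{n-k-1}$, and setting $u = \phi^k > 0$, the pointwise sign of $\frac{i}{2}\partial\overline\partial\tilde\Omega^k\wedge\tilde\Omega^{n-k-1}$ as a top form coincides with the sign of $L(u)$, while the defining equation for $\gamma_k(\Omega)$ recalled just before the Proposition reads precisely $L(e^v) = \gamma_k(\Omega)\,e^v$.

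For the forward direction I would substitute $\tilde\Omega := e^{v/k}\,\Omega$ into the displayed identity, obtaining
$$
\frac{i}{2}\partial\overline\partial\tilde\Omega^k\wedge\tilde\Omega^{n-k-1} = \gamma_k(\Omega)\,e^{(n-1)v/k}\,\Omega^n,
$$
whose pointwise sign is exactly that of $\gamma_k(\Omega)$. For the converse, suppose a conformal $\tilde\Omega = \phi\,\Omega$ satisfies $\frac{i}{2}\partial\overline\partial\tilde\Omega^k\wedge\tilde\Omega^{n-k-1}$ of fixed sign $s$ everywhere, so that $L(u)$ has the same sign $s$ with $u = \phi^k$. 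By Krein-Rutman theory applied to the formal adjoint $L^*$ (which is set up via the standard compact-manifold identity $\int_M\partial\overline\partial\alpha\wedge\beta = \int_M\alpha\wedge\partial\overline\partial\beta$) there exists a positive function $u^* > 0$ with $L^*(u^*) = \gamma_k(\Omega)\,u^*$. Pairing then yields
$$
\int_M L(u)\,u^*\,\Omega^n = \int_M u\,L^*(u^*)\,\Omega^n = \gamma_k(\Omega)\int_M u\,u^*\,\Omega^n,
$$
and since $u, u^* > 0$ while $L(u)$ has pointwise sign $s$, the left-hand side has sign $s$, forcing $\gamma_k(\Omega)$ to share it.

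The main obstacle is the elliptic-theoretic input needed for the converse: checking that $L$ is a uniformly elliptic second-order operator and invoking Krein-Rutman theory to produce the positive adjoint eigenfunction $u^*$ with eigenvalue $\gamma_k(\Omega)$. Once this framework is in place, all three cases $s > 0$, $s = 0$ and $s < 0$ are handled by the same pointwise-to-integral sign comparison, and the conformal invariance of the sign of $\gamma_k$ noted just before the Proposition becomes an immediate corollary.
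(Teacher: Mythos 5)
The paper does not prove this proposition; it is quoted verbatim from [FWW], so there is no in-text argument to compare against. Your proof is correct and is, in substance, the standard one (and essentially the argument in [FWW]): the conformal rescaling identity, the forward direction via $\tilde\Omega=e^{v/k}\Omega$, and the converse by pairing against a positive eigenfunction of the formal adjoint are all sound, and the degrees involved are even so the integration-by-parts identity $\int_M\partial\overline\partial\alpha\wedge\beta=\int_M\alpha\wedge\partial\overline\partial\beta$ holds without sign issues. The only point worth tightening is your assertion that the positive Krein--Rutman eigenfunction $u^*$ of $L^*$ has eigenvalue exactly $\gamma_k(\Omega)$: rather than appealing to the identification of $\gamma_k$ with the principal eigenvalue of $L$ and the equality of principal eigenvalues of $L$ and $L^*$, you can get this directly by pairing the two positive eigenfunctions, since $\gamma_k\int_M e^v u^*\,\Omega^n=\int_M L(e^v)\,u^*\,\Omega^n=\int_M e^v\,L^*(u^*)\,\Omega^n=\mu\int_M e^v u^*\,\Omega^n$ with $\int_M e^v u^*\,\Omega^n>0$ forces $\mu=\gamma_k$.
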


For $n = 3$, by \cite[Theorem 6]{FWW} one has the following

\begin{proposition}\label{curve} \cite{FWW} On any  compact $3$-dimensional complex manifold there exists a Hermitian metric $g$ such that  its fundamental $2$-form $\Omega$ has $\gamma_1 (\Omega) >0$.
\end{proposition}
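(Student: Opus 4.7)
My plan is to appeal to the previous proposition: $\gamma_1(\Omega)>0$ holds if and only if there exists a Hermitian metric $\tilde g$ in the conformal class of $g$ for which the $(3,3)$-form $\tfrac{i}{2}\partial\bar\partial\tilde\Omega\wedge\tilde\Omega$ is a pointwise positive multiple of $\tilde\Omega^3$. Hence it suffices to exhibit, on any compact complex $3$-manifold $(M,J)$, one Hermitian metric $\tilde\Omega$ satisfying this pointwise positivity condition.

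First I would analyze the pointwise expression in local coordinates. Writing $\alpha=\partial\Omega$ and using $\partial\bar\partial=-\bar\partial\partial$,
\[
\tfrac{i}{2}\,\partial\bar\partial\Omega\wedge\Omega \;=\; -\tfrac{i}{2}\,\bar\partial\alpha\wedge\Omega,
\]
so the sign at a point depends on second derivatives of $\Omega$. In a local holomorphic chart, writing $\Omega=i\sum g_{j\bar k}\,dz^j\wedge d\bar z^k$, one can freely prescribe $g_{j\bar k}(0)$ (any positive Hermitian matrix) together with its first and second derivatives (subject only to the reality constraint $\overline{g_{j\bar k}}=g_{k\bar j}$); in particular the $(3,3)$-form $-\tfrac{i}{2}\bar\partial\alpha\wedge\Omega$ at the origin can be made an arbitrary positive multiple of the volume form by a suitable explicit choice, so there is no pointwise obstruction.

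The heart of the argument, and the main obstacle, is the globalization: cover $M$ by finitely many such coordinate charts on each of which an explicit local Hermitian form is arranged as above, and glue them via a partition of unity into a global Hermitian form $\tilde\Omega$. Because $\tfrac{i}{2}\partial\bar\partial\tilde\Omega\wedge\tilde\Omega$ depends non-linearly on $\tilde\Omega$, cutoff transitions introduce cross terms in the overlap regions which may destroy pointwise positivity. To control them one amplifies the torsion in the interior of each local model so that its positive contribution dominates the cutoff cross terms pointwise, and, if needed, performs a conformal rescaling $\tilde\Omega\mapsto e^{2f}\tilde\Omega$ concentrated on the transition regions: by the previous proposition it is enough that pointwise positivity hold for \emph{some} representative of the conformal class, so the function $f$ provides the extra flexibility needed to absorb any remaining negative contributions. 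Once such a $\tilde\Omega$ has been constructed, the previous proposition yields $\gamma_1(\tilde\Omega)>0$, proving the statement.
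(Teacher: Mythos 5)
First, for calibration: the paper does not prove this proposition at all --- it is imported from \cite{FWW} (Theorem~6 there) and used as a black box --- so your argument has to stand on its own. Your opening reduction is correct: by the preceding proposition it suffices to exhibit one Hermitian form $\tilde\Omega$ with $\frac{i}{2}\partial\bar\partial\tilde\Omega\wedge\tilde\Omega>0$ pointwise, and the local statement is fine (e.g.\ $\Omega=i\sum_j(1+|z|^2)\,dz^j\wedge d\bar z^j$ has $i\,\partial\bar\partial\Omega\wedge\Omega=\beta^2\wedge\Omega>0$ on the whole chart, where $\beta=i\sum_j dz^j\wedge d\bar z^j$). But what you yourself call the heart of the argument is not carried out, and both devices you propose for the gluing fail. (i) Amplification: for $\Omega=\sum_i\rho_i\Omega_i$ the offending terms are $i\,\partial\bar\partial\rho_i\wedge\Omega_i\wedge\Omega$ and $i\,\partial\rho_i\wedge\bar\partial\Omega_i\wedge\Omega$, in which the derivatives of the cutoff multiply the \emph{full} local form $\Omega_i$; rescaling $\Omega_i$ (or its ``torsion part'') scales these cross terms at the same rate as the good term $\rho_i\, i\,\partial\bar\partial\Omega_i\wedge\Omega$ on the transition annuli, so no separation of scales is gained and the sign on the overlaps remains uncontrolled. (ii) The conformal rescaling is circular: the sign of $\gamma_1$ is a conformal invariant (quoted in the paper immediately before this proposition), and the very proposition you invoke says a conformal representative with pointwise positivity exists \emph{if and only if} $\gamma_1(\tilde\Omega)>0$ for your glued metric. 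Appealing to $f$ to ``absorb any remaining negative contributions'' therefore assumes exactly what is to be proved; were such absorption always possible, no Hermitian metric could have $\gamma_1<0$, contradicting the examples constructed in Sections~3 and~4 of this paper.

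There is also a concrete reason the globalization cannot be soft. On a K\"ahler $3$-fold with K\"ahler form $\omega_0$, any perturbation $\tilde\Omega=\omega_0+\epsilon\eta$ satisfies $i\,\partial\bar\partial\tilde\Omega\wedge\tilde\Omega=\epsilon\, i\,\partial\bar\partial\eta\wedge\omega_0+\epsilon^2\, i\,\partial\bar\partial\eta\wedge\eta$, and $\int_M i\,\partial\bar\partial\eta\wedge\omega_0=0$ by Stokes; hence the leading term is either identically zero or strictly negative somewhere, so pointwise positivity can only come from the quadratic term beating a sign-changing linear one. This is precisely the global, nonlinear phenomenon that a patchwork of local models does not see, and it is why the existence statement requires a genuinely global construction (as in \cite{FWW}) rather than local prescription plus a partition of unity. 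As written, your proposal proves only the easy pointwise fact and asserts the theorem.
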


 So for $n =3$ if one finds a Hermitian metric $\tilde g$ such that its fundamental $2$-form $ \tilde \Omega$ has $\gamma_1 (\tilde \Omega) <0$, by using \cite[Corollary 10]{FWW} then there exists a $1$-st Gauduchon metric.

\smallskip

We recall that the Lee form of a Hermitian manifold $(M,J,g)$ of complex dimension $n$ is the
1-form
$$\theta =J*d*\Omega=Jd^*\Omega,$$
where $d^*$ is the formal adjoint of $d$ with
respect to $g$.
The formula $*\Omega=\Omega^{n-1}/(n-1)!$ implies that $d(\Omega^{n-1})=\theta \wedge \Omega^{n-1}$.
The Hermitian structure  $(J, g)$  is called \emph{balanced} if $\theta$ vanishes or equivalently if $d(\Omega^{n-1})=0$ and such structures belong that  the class $W_3$ in the well known Gray-Hervella classification \cite{GH}.

In real dimension $4$, the SKT condition is equivalent to $d^*\theta=0$, but in higher dimension
both  the SKT  and  the astheno-K\"ahler condition on a compact Hermitian manifold are complementary to the
balanced one \cite{AI,MT}. In the case of $1$-st Gauduchon metrics we can prove  the following

\begin{proposition} Let $(M, J)$ be  a compact complex manifold of complex dimension $n \geq 3$. If $g$ is  a $J$-Hermitian metric which is $1$-st Gauduchon and balanced, then $g$ is K\"ahler.
\end{proposition}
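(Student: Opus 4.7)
The plan is to combine the two hypotheses through a single Stokes integration by parts and then recognize the resulting integrand as a pointwise norm. First I would observe that the balanced condition $d\Omega^{n-1}=0$ is equivalent to $d\Omega\wedge\Omega^{n-2}=0$, and by bidegree this splits as $\partial\Omega\wedge\Omega^{n-2}=0$ and $\overline\partial\Omega\wedge\Omega^{n-2}=0$; in particular, the $(2,1)$-form $\partial\Omega$ is primitive with respect to the Lefschetz operator $L=\Omega\wedge\cdot\,$.

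Next I would apply Stokes' theorem to the $(2n-1)$-form $\overline\partial\Omega\wedge\Omega^{n-2}$ on $M$. Using $\overline\partial^{\,2}=0$ and the Leibniz rule, one gets
\begin{equation*}
0=\int_M d\bigl(\overline\partial\Omega\wedge\Omega^{n-2}\bigr)=\int_M \partial\overline\partial\Omega\wedge\Omega^{n-2}-(n-2)\int_M \overline\partial\Omega\wedge d\Omega\wedge\Omega^{n-3}.
\end{equation*}
The first integrand vanishes pointwise by the $1$-st Gauduchon hypothesis, while $\overline\partial\Omega\wedge\overline\partial\Omega\wedge\Omega^{n-3}$ has bidegree $(n-1,n+1)$ and is therefore identically zero. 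Combining these facts with $\overline\partial\Omega\wedge\partial\Omega=-\partial\Omega\wedge\overline\partial\Omega$ (since both factors are $3$-forms) and using that the hypothesis $n\geq 3$ makes $\Omega^{n-3}$ meaningful and the factor $n-2$ nonzero, one concludes
\begin{equation*}
\int_M \partial\Omega\wedge\overline\partial\Omega\wedge\Omega^{n-3}=0.
\end{equation*}

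The final step is to interpret this integrand as a nonnegative multiple of the volume form. Since $\partial\Omega$ is a primitive $(2,1)$-form, the Weil identity for primitive forms on a Hermitian vector space yields a pointwise equality of the shape $\partial\Omega\wedge\overline\partial\Omega\wedge\Omega^{n-3}=c_n\,\sqi\,|\partial\Omega|^2_g\,\Omega^n$ with $c_n>0$ depending only on $n$. Substituting into the previous vanishing forces $|\partial\Omega|_g\equiv 0$, so $\partial\Omega=0$, and taking conjugates yields $d\Omega=0$; that is, $g$ is K\"ahler. I expect the only delicate ingredient to be verifying the sign and positivity in the Weil identity, which is a standard computation in an orthonormal coframe and relies precisely on the primitivity of $\partial\Omega$ established from the balanced hypothesis in the first step; the remainder of the argument is a routine Stokes-type integration by parts.
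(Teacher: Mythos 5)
Your proof is correct, and it takes a genuinely different route from the paper's. The paper does not integrate at all: it invokes the Matsuo--Takahashi identity ${L^*}^{3}(2i\partial\overline\partial\Omega\wedge\Omega)=96(n-2)\bigl[2d^*\theta+2\|\theta\|^2-\|T\|^2\bigr]$ together with a commutation formula for $L$ and $L^*$, so that the $1$-st Gauduchon condition becomes the pointwise identity $2d^*\theta+2\|\theta\|^2=\|T\|^2$ for the Chern torsion $T$; balancedness kills $\theta$ and hence $T$. You instead integrate $d(\overline\partial\Omega\wedge\Omega^{n-2})$ over $M$, use the $1$-st Gauduchon hypothesis and bidegree reasons to reduce to $\int_M\partial\Omega\wedge\overline\partial\Omega\wedge\Omega^{n-3}=0$, and then exploit the primitivity of $\partial\Omega$ (correctly extracted from $d\Omega\wedge\Omega^{n-2}=0$ by splitting into types $(n,n-1)$ and $(n-1,n)$) via the pointwise Weil identity, whose sign for a primitive $(2,1)$-form is indeed positive. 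Your argument is more self-contained --- Stokes plus linear algebra on a Hermitian vector space, no external torsion formula --- but it is genuinely global and uses compactness essentially, whereas the paper's route is pointwise once the Matsuo--Takahashi formula is granted, and moreover produces the identity $d^*\theta=\tfrac12\|T\|^2-\|\theta\|^2$ that the paper reuses in the proof of the conformally balanced case (Theorem~\ref{conform-balanced}); your method would not directly yield that refinement.
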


\begin{proof}
For  a  compact Hermitian manifold $(M, J, g)$ one has the two
natural linear operators acting on  differential forms:
$$
L \varphi = \Omega \wedge \varphi,
$$
and the adjoint operator $L^*$ of $L$ with respect to the global scalar product defined by
$$
< \varphi, \psi > =p! \int_M (\varphi, \psi) {\mbox {vol}}_g,
$$
where $\Omega$ is the fundamental $2$-form of $(J, g)$, $(\varphi, \psi)$ is the pointwise $g$-scalar product and ${\mbox {vol}}_g$ is the volume form.
By \cite{MT}  one has
\begin{equation}\label{leeform}
 {L^*}^{3} (2 i \partial \overline \partial \Omega  \wedge \Omega)= 96 (n -2)
[2 d^*  \theta + 2 \vert \vert \theta \vert \vert^2 - \vert \vert T \vert \vert^2 ],
\end{equation}
where $\theta$ is the Lee form, $d^* \theta$ its
co-differential, $\vert \vert \theta \vert \vert$ its $g$-norm and $T$ is the torsion of the Chern
connection $\nabla^C$. \newline
By using the formula
$$
{L^*}^r L^s \varphi = L^s  {L^*}^r \varphi + \sum_{i = 1}^r 4^i (i!)^2 \left ( \begin{array}{c} s\\ i \end{array}  \right ) \left ( \begin{array}{c} r\\ i \end{array}  \right ) \left ( \begin{array}{c} n - p - s + r\\ i \end{array}  \right ) L^{s -i} {L^*}^{r - i} \varphi,
$$
which holds for any $p$-form $\varphi$  and any  positive integer   $r \leq s$, one gets
\begin{equation}\label{leeformappll}
{L^{*}}^n (2 i \partial \overline \partial \Omega \wedge \Omega^{n -2}) =
{L^{*}}^n (L^{n - 3} (2 i \partial \overline \partial \Omega \wedge \Omega)) =
4^{n} \frac{n !}{3 !} (n - 3) ! {L^*}^3  (2 i \partial \overline \partial \Omega  \wedge \Omega).
\end{equation}
Therefore, if $(J, g)$ is balanced, then $\theta =0$ and, consequently,
the $1$-st Gauduchon
condition implies that $T =0$, i.e. $g$ is K\"ahler.
\end{proof}

A Hermitian structure is called \emph{conformally balanced} if the Lee form $\theta$ is $d$-exact.
By \cite{IP,P,FT}, a conformally balanced SKT (or astheno-K\"ahler) structure on a compact manifold of complex dimension $n$ whose Bismut
connection has (restricted) holonomy contained in $SU(n)$ is necessarily K\"ahler. We  can now prove a similar result for
the $1$-st Gauduchon  metrics.

\begin{theorem}\label{conform-balanced}
A conformally balanced $1$-st Gauduchon structure $(J, g)$ on a compact manifold of
complex dimension $n \geq 3$ whose Bismut connection has (restricted) holonomy contained in $SU(n)$ is
necessarily K\"ahler
and therefore it is a Calabi-Yau structure.
\end{theorem}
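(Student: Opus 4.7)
The plan is to adapt the argument of the preceding proposition by combining the pointwise identity obtained from the $1$-st Gauduchon condition with a weighted integration that exploits the conformally balanced hypothesis; the Bismut $SU(n)$ condition will enter only at the very end, to upgrade K\"ahler to Calabi--Yau.

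First, I would record that the computation carried out in the proof of the preceding proposition actually shows that, for every compact $1$-st Gauduchon Hermitian manifold of complex dimension $n \geq 3$, one has the pointwise identity
\begin{equation*}
2\, d^*\theta \;+\; 2\,\|\theta\|^2 \;-\; \|T\|^2 \;=\; 0,
\end{equation*}
where $T$ denotes the torsion of the Chern connection. This is what formulas \eqref{leeform} and \eqref{leeformappll} give, once one observes that their conclusion is valid pointwise and not only in the balanced case. Next, the conformally balanced hypothesis supplies a smooth function $f \in C^\infty(M)$, determined up to an additive constant, such that $\theta = df$.

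The key move is to multiply the identity above by the positive weight $e^{-f}$ and integrate over the compact manifold $M$. Since $d^*$ is the formal $L^2$-adjoint of $d$ and $d(e^{-f}) = -e^{-f}\,\theta$, an integration by parts yields
\begin{equation*}
\int_M e^{-f}\, d^*\theta \;\, \mathrm{vol}_g
\;=\; \int_M \langle d(e^{-f}),\,\theta \rangle \;\, \mathrm{vol}_g
\;=\; -\int_M e^{-f}\,\|\theta\|^2 \;\, \mathrm{vol}_g,
\end{equation*}
so the two $\theta$-contributions in the identity cancel and one is left with
\begin{equation*}
\int_M e^{-f}\,\|T\|^2 \,\, \mathrm{vol}_g \;=\; 0.
\end{equation*}
Because $e^{-f}>0$, this forces $T \equiv 0$, i.e.\ $d\Omega = 0$, so $(J,g)$ is K\"ahler.

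Once $g$ is K\"ahler, the Bismut connection coincides with the Levi-Civita connection, so the hypothesis that the Bismut restricted holonomy is contained in $SU(n)$ translates into the Levi-Civita restricted holonomy being contained in $SU(n)$, which is exactly the Calabi--Yau condition. I expect the main obstacle to be purely bookkeeping: checking that the signs in \eqref{leeform} and in the integration by parts are consistent so that the $\theta$-terms actually cancel rather than reinforce; up to that verification, the Bismut $SU(n)$ assumption is used only to promote the K\"ahler conclusion to Calabi--Yau, matching the strategy of \cite{IP,P,FT} for SKT and astheno-K\"ahler structures.
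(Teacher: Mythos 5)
Your first step coincides with the paper's: the $1$-st Gauduchon hypothesis, fed through \eqref{leeform} and \eqref{leeformappll}, yields the pointwise identity $2\,d^*\theta+2\|\theta\|^2-\|T\|^2=0$. From there the two arguments diverge completely. The paper keeps the $SU(n)$ holonomy hypothesis at the center of the K\"ahler step: Bismut Ricci-flatness turns the identity, via the Alexandrov--Ivanov relation $2u=b+2d^*\theta+2\|\theta\|^2$, into $2u=\|T\|^2\geq 0$; if $u$ were positive somewhere the Ivanov--Papadopoulos vanishing theorem would kill all plurigenera, contradicting the nowhere-vanishing holomorphic $(n,0)$-form that the conformally balanced condition \emph{together with} the $SU(n)$ holonomy produces (this last point is worth noting: the holomorphic volume form is not a consequence of conformal balancedness alone). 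Your route instead writes $\theta=df$, multiplies the identity by $e^{-f}$ and integrates by parts, so that the two $\theta$-contributions cancel and $\int_M e^{-f}\|T\|^2=0$ follows at once. The integration by parts itself is correct, and it is robust: even if the coefficients of $d^*\theta$ and $\|\theta\|^2$ in \eqref{leeform} were $\alpha$ and $\beta$ with $\alpha\neq 0$, the weight $e^{-\beta f/\alpha}$ would do the same job.

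Precisely because of that robustness you should be suspicious. Your argument never uses the holonomy hypothesis to reach the K\"ahler conclusion, so it would prove the strictly stronger statement that \emph{every} compact conformally balanced $1$-st Gauduchon metric --- in particular every compact conformally balanced SKT metric --- is K\"ahler. None of the results you are generalizing (\cite{IP}, \cite{P}, \cite{FT}, nor the theorem as stated here) dispenses with the $SU(n)$ condition, and the balanced-case references \cite{MT,AI} are cited only for $\theta=0$, not $\theta$ exact. The entire weight of your proof therefore rests on \eqref{leeform} being an exact \emph{pointwise} identity with precisely the stated combination $2d^*\theta+2\|\theta\|^2-\|T\|^2$ and no further terms of indefinite sign; any extra curvature or Nijenhuis-type term, or a mismatch in the $d^*\theta$ coefficient relative to the source \cite{MT}, breaks the cancellation while leaving the paper's Proposition 2.4 (where $\theta\equiv 0$) untouched. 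Before claiming the improvement you must verify the Matsuo--Takahashi formula at the source rather than as transcribed here. The concluding step --- once $g$ is K\"ahler the Bismut and Levi-Civita connections coincide, so restricted holonomy in $SU(n)$ gives the Calabi--Yau conclusion --- is fine and agrees with the paper.
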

\begin{proof} Since the Hermitian structure is $1$-st Gauduchon, then by \eqref{leeform} and \eqref{leeformappll}, we have
$$
2  d^* \theta + 2 \vert \vert \theta \vert \vert^2 - \vert \vert T \vert \vert^2 =0\,.
$$
Therefore,
\begin{equation} \label{deltatheta}
d^* \theta = \frac{1} {2} \vert \vert T \vert \vert ^2 -  \vert \vert \theta \vert \vert ^2.
\end{equation}
By \cite[formula (2.11)]{AI}, the trace $2u$ of the Ricci form of the Chern connection is related to
the trace $b$ of the Ricci form of the Bismut connection by the equation
\begin{equation} \label{tracericci}
2 u = b + 2 d^* \theta + 2 \vert \vert \theta \vert \vert^2\,.
\end{equation}
Since  the condition that the Bismut connection has (restricted) holonomy contained
in $SU(n)$ implies that the Ricci form of the Bismut connection vanishes,
by using \eqref{deltatheta} and \eqref{tracericci}, we obtain
$$
2 u = \vert \vert T \vert \vert^2.
$$
Now, if $u >0$ then it follows from \cite[Theorem 4.1 and formula (4.4)]{IP} that
all plurigenera of $(M,J)$ vanish.
But, since $(J, g)$ is conformally balanced, there exists a
nowhere vanishing holomorphic $(n,0)$-form by
\cite{Str,P}. Therefore, $u$ must vanish and $g$ is K\"ahler.
\end{proof}

\section{Generalized Gauduchon metrics on 6-nilmanifolds} \label{nilmanifolds}

In this section we study $1$-st Gauduchon metrics  on complex nilmanifolds  $(M= \Gamma \backslash G , J)$ of real dimension six endowed
with an invariant complex structure $J$, that is,  $G$ is a simply-connected nilpotent Lie
group and $\Gamma$  is  a lattice in $G$ of maximal rank, and $J$ arises from a left-invariant complex structure on the Lie group $G$.

In \cite{Salamon}  S. Salamon proved  that, up to isomorphism, there are exactly eighteen nilpotent
Lie algebras of real dimension 6 admitting complex structures. In \cite{U} it was shown that in fact  there are two special and disjoint types of complex equations for the  $6$-dimensional nilpotent Lie algebras, depending
on the \lq \lq nilpotency" of the complex structure. We recall that  a complex structure $J$ on a $2n$-dimensional nilpotent Lie algebra $\mathfrak g$ is called nilpotent if there is a
basis of $(1,0)$-forms  $\{ \omega_j \}_{j = 1}^n$  satisfying $d \omega^1 =0$ and
$$
d \omega^j \in \Lambda^2 < \omega^1, \dots, \omega^{j -1}, \overline \omega^1, \dots, \overline \omega^{j -1} >, \quad j =2, \dots, n.
$$

In dimension six one has the following

\begin{proposition}\label{J-red}\cite{U, UV}
Let $J$ be an invariant complex structure on a  nilmanifold $M$ of real dimension six.
\begin{enumerate}
\item[(i)] If $J$ is nilpotent then there is a global basis
$\{\omega^j\}_{j=1}^3$ of  invariant $(1,0)$-forms on $M$ satisfying
\begin{equation}\label{nilpotentJ}
\left\{
\begin{array}{lcl}
d\omega^1 \zzz & = &\zzz 0,\\
d\omega^2 \zzz & = &\zzz \epsilon\, \omega^{1\bar{1}} \, ,\\
d\omega^3 \zzz & = &\zzz \rho\, \omega^{12} + (1-\epsilon)A\,
\omega^{1\bar{1}} + B\, \omega^{1\bar{2}} + C\, \omega^{2\bar{1}} +
(1-\epsilon)D\, \omega^{2\bar{2}},
\end{array}
\right.
\end{equation}
where $A,B,C,D\in \mathbb{C}$, and $\epsilon,\rho \in \{0,1\}$.
\item[(ii)] If $J$ is non-nilpotent then there is a global basis
$\{\omega^j\}_{j=1}^3$ of  invariant $(1,0)$-forms on $M$ satisfying
\begin{equation}\label{non-nilp-reducido}
\begin{cases}
\begin{array}{lcl}
d\omega^1 \zzz & = &\zzz 0,\\
d\omega^2 \zzz & = &\zzz  \omega^{13} + \omega^{1\bar{3}} \, ,\\
d\omega^3 \zzz & = &\zzz  i\epsilon\, \omega^{1\bar{1}} \pm i\,
(\omega^{1\bar{2}} - \omega^{2\bar{1}}) ,
\end{array}
\end{cases}
\end{equation}
with $\epsilon=0,1$.
\end{enumerate}
\end{proposition}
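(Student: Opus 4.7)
The plan is to construct, in each case, a normal-form basis of invariant $(1,0)$-forms by combining Salamon's theorem (which ensures a non-zero closed invariant $(1,0)$-form on any $6$-dimensional nilpotent Lie algebra admitting a complex structure) with successive $\C$-linear changes of basis, and then to normalize the resulting coefficients to the explicit values in \eqref{nilpotentJ} and \eqref{non-nilp-reducido}.

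For part (i), nilpotency of $J$ furnishes a basis $\{\omega^1,\omega^2,\omega^3\}$ of invariant $(1,0)$-forms satisfying $d\omega^1=0$ and $d\omega^j\in\Lambda^2\langle\omega^1,\ldots,\omega^{j-1},\bar\omega^1,\ldots,\bar\omega^{j-1}\rangle$. Integrability of $J$ forces each $d\omega^j$ to be of pure type $(2,0)+(1,1)$, so $d\omega^2=a\,\omega^{1\bar{1}}$ for some $a\in\C$, and rescaling of $\omega^2$ reduces $a$ to $\epsilon\in\{0,1\}$. For $\omega^3$ the shape must be
$$d\omega^3=r\,\omega^{12}+\alpha\,\omega^{1\bar{1}}+B\,\omega^{1\bar{2}}+C\,\omega^{2\bar{1}}+\delta\,\omega^{2\bar{2}},$$
and rescaling $\omega^3$ normalizes $r$ to $\rho\in\{0,1\}$. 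To finish I would observe two facts, both specific to $\epsilon=1$: the substitution $\omega^3\mapsto\omega^3-\alpha\,\omega^2$ eliminates the $\omega^{1\bar{1}}$ term because $d\omega^2=\omega^{1\bar{1}}$; and a direct computation yields $d\,\omega^{2\bar{2}}=\omega^{1\bar{1}\bar{2}}-\omega^{12\bar{1}}$ when $\epsilon=1$, so the integrability condition $d^2\omega^3=0$ forces $\delta=0$. Both observations produce the $(1-\epsilon)$ prefactors appearing in front of $A$ and $D$, while $B$ and $C$ remain free complex parameters.

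For part (ii), I would again begin with a closed invariant $(1,0)$-form $\omega^1$, but now use the non-nilpotency of $J$ to force the existence of a $(1,0)$-form $\omega^2$ whose differential escapes $\Lambda^2\langle\omega^1,\bar\omega^1\rangle$. Combining the integrability equations, the identity $d^2=0$, and Salamon's classification of $6$-dimensional nilpotent Lie algebras admitting complex structures, one shows that after suitable rescalings one may take $d\omega^2=\omega^{13}+\omega^{1\bar{3}}$. Once $\omega^2$ is normalized, the admissible $d\omega^3$ is rigid: its $(2,0)$-part must vanish (otherwise $J$ would again be nilpotent), and its $(1,1)$-part reduces after further linear adjustments and rescalings to the form $i\epsilon\,\omega^{1\bar{1}}\pm i(\omega^{1\bar{2}}-\omega^{2\bar{1}})$, with $\epsilon\in\{0,1\}$ and the sign accounting for the two possible orientations of the normalization.

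The main obstacle is part (ii). While the nilpotent case amounts essentially to bookkeeping once Salamon's existence theorem is in hand, verifying that every non-nilpotent invariant $J$ admits a basis bringing $(d\omega^2,d\omega^3)$ precisely to the rigid form \eqref{non-nilp-reducido}---and that the \emph{a priori} continuous parameters really collapse to the single discrete invariant $\epsilon\in\{0,1\}$---requires a careful interplay between $J$ and the ascending central series of the underlying Lie algebra, together with a case analysis over the finite list of admissible $6$-dimensional nilpotent Lie algebras.
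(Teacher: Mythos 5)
The paper does not prove this proposition; it is quoted verbatim from \cite{U} and \cite{UV}, so there is no internal argument to compare against and your attempt must stand on its own. Part (i) does: given the defining basis of a nilpotent $J$, integrability leaves only $a\,\omega^{1\bar{1}}$ for $d\omega^2$, rescalings of $\omega^2$ and $\omega^3$ give $\epsilon,\rho\in\{0,1\}$, and when $\epsilon=1$ the substitution $\omega^3\mapsto\omega^3-\alpha\,\omega^2$ removes the $\omega^{1\bar{1}}$ term while $d^2\omega^3=\delta\,(\omega^{1\bar{1}\bar{2}}-\omega^{12\bar{1}})=0$ forces $\delta=0$ (all other monomials in $d\omega^3$ are closed). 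That is a complete and correct derivation of \eqref{nilpotentJ}.

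Part (ii), however, has a genuine gap: everything that makes the non-nilpotent case hard is asserted rather than derived. Concretely: (a) Salamon's theorem only gives $d\omega^2\in\mathcal{I}(\omega^1)$, i.e.\ $d\omega^2=\omega^1\wedge\eta$ with $\eta$ an arbitrary combination of $\omega^2,\omega^3,\bar\omega^1,\bar\omega^2,\bar\omega^3$; you do not explain why non-nilpotency forces the $\omega^{13}$ and $\omega^{1\bar{3}}$ coefficients to be nonzero, why the remaining ones can be removed, nor why the two surviving coefficients can be \emph{simultaneously} normalized to $1$ --- a rescaling $\omega^3\mapsto\lambda\omega^3$ multiplies one by $\lambda$ and the other by $\bar\lambda$, so even this step needs an argument. (b) The claim that a nonzero $(2,0)$-part of $d\omega^3$ would make $J$ nilpotent is unjustified: nilpotency is the existence of \emph{some} adapted basis, and the presence of a $\omega^{12}$ or $\omega^{13}$ term in one particular basis does not contradict it. (c) The collapse of the a priori continuous coefficients of $d\omega^3$ to the discrete data $\epsilon\in\{0,1\}$ plus a sign --- in particular why the $\omega^{1\bar{1}}$ coefficient may be taken purely imaginary and the $\omega^{1\bar{2}}$, $\omega^{2\bar{1}}$ coefficients equal to $\pm i$, $\mp i$ --- is precisely the content of the computation in \cite{U,UV}, and ``further linear adjustments and rescalings'' is not a substitute for carrying it out. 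As written, part (ii) is a plan for a proof, not a proof.
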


The fundamental 2-form $\Omega$ of any invariant
$J$-Hermitian metric $g$ on $M$ is then given by
\begin{equation}\label{2forma}
\Omega= \sum_{j,k=1}^3 x_{j\bar{k}}\, \omega^{j\bar{k}},
\end{equation}
where $x_{j\bar{k}} \in \mathbb{C}$ and $\overline{x_{k\bar{j}}}=-x_{j\bar{k}}$. Notice that
the positive definiteness of the metric $g$ implies  in particular that $$-i\, x_{j\bar{j}} \in \mathbb{R}^+, \quad \quad
i \det (x_{j\bar{k}}) >0.
$$

The following result is proved by a direct calculation, so we omit
the proof.

\begin{lemma}\label{partialOmega}
Let $(J,g)$ be a Hermitian structure on a $6$-dimensional nilpotent Lie algebra
$\frg$, and $\Omega$ its fundamental form.
\begin{enumerate}
\item[{\rm (i)}] If $J$ is nilpotent, then in terms of the basis
$\{\omega^j\}_{j=1}^3$
satisfying~\eqref{nilpotentJ}, the $(2,2)$-form $\partial\db\Omega$ is given
by
$$
\partial\db\Omega = x_{3\bar{3}} \left(\rho+|B|^2+|C|^2 - 2(1-\epsilon)\,
\Real(A\bar{D}) \right) \omega^{1\bar{1}2\bar{2}}.
$$
This implies that
$$
\frac{i}{2}\partial\db\Omega \wedge \Omega = \frac{i}{2} \frac{x_{3\bar{3}}^2}{\det (x_{j\bar{k}})} \left(\rho+|B|^2+|C|^2 - 2(1-\epsilon)\,
\Real(A\bar{D}) \right)\, \Omega^3,
$$
and therefore the sign of $\gamma_1(\Omega)$ depends only on the complex structure.
\item[{\rm (ii)}] If $J$ is non-nilpotent, then in terms of the
basis $\{\omega^j\}_{j=1}^3$
satisfying~\eqref{non-nilp-reducido}, the form $\partial\db\Omega$
is given by
$$
\partial\db\Omega = 2 x_{3\bar{3}}\, \omega^{1\bar{1}2\bar{2}}
+2 x_{2\bar{2}}\, \omega^{1\bar{1}3\bar{3}}.
$$
This implies that
$$
\frac{i}{2}\partial\db\Omega \wedge \Omega = i \frac{x_{2\bar{2}}^2 + x_{3\bar{3}}^2}{\det (x_{j\bar{k}})}\, \Omega^3,
$$
and therefore $\gamma_1(\Omega)>0$ for any $\Omega$.
\end{enumerate}
\end{lemma}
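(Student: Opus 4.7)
The plan is a direct computation exploiting the sparsity of the structure equations \eqref{nilpotentJ} and \eqref{non-nilp-reducido}.

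For part (i), the key structural observation is that $d\omega^1,\, d\omega^2,\, d\omega^3$ all lie in $\Lambda^2\langle\omega^1,\omega^2,\bar\omega^1,\bar\omega^2\rangle$: no $\omega^3$ or $\bar\omega^3$ appears on the right-hand side of any of them. Since $\partial\db\Omega$ is a $(2,2)$-form obtained by applying $d$ twice to $\Omega$, this forces it to be a scalar multiple of the unique $(2,2)$-form in that subalgebra, namely $\omega^{1\bar 1 2\bar 2}$. I would expand $\partial\db\Omega=\sum_{j,k}x_{j\bar k}\,\partial\db\omega^{j\bar k}$ and verify term by term that only the $x_{3\bar 3}\omega^{3\bar 3}$ summand contributes: for $j,k\le 2$, the form $\omega^{j\bar k}$ already lies in the subalgebra and a second application of $d$ is forced to produce a repeated factor $\omega^1\wedge\omega^1$ or $\bar\omega^1\wedge\bar\omega^1$, hence zero; if exactly one of $j,k$ equals $3$, the leftover $\omega^3$ or $\bar\omega^3$ in $\db\omega^{j\bar k}$ prevents $\partial$ of that form from landing in $\langle\omega^{1\bar 1 2\bar 2}\rangle$.

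The calculation thus reduces to $\partial\db\omega^{3\bar 3}$. From $d\omega^{3\bar 3}=d\omega^3\wedge\bar\omega^3-\omega^3\wedge d\bar\omega^3$ I would extract the $(1,2)$-component to identify $\db\omega^{3\bar 3}$ and then apply $\partial$ via the structure equations a second time. Sign bookkeeping, together with the identities $\rho^2=\rho$ and $(1-\epsilon)^2=1-\epsilon$ (valid since $\rho,\epsilon\in\{0,1\}$), combines the cross terms $A\overline{D}$ and $\overline{A}D$ into $-2(1-\epsilon)\Real(A\overline{D})$ and produces the stated coefficient $\rho+|B|^2+|C|^2-2(1-\epsilon)\Real(A\overline{D})$. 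The second identity in (i) is then immediate since $\omega^{1\bar 1 2\bar 2}\wedge\Omega=x_{3\bar 3}\omega^{1\bar 1 2\bar 2 3\bar 3}$ (only the $\omega^{3\bar 3}$-summand of $\Omega$ survives the wedge) and $\Omega^3$ is a positive numerical multiple of $\det(x_{j\bar k})\,\omega^{1\bar 1 2\bar 2 3\bar 3}$.

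Part (ii) proceeds by the same strategy, except that now $d\omega^2=\omega^{13}+\omega^{1\bar 3}$ introduces $\omega^3$ and $\bar\omega^3$ factors, so the image of $\partial\db$ on $\Omega$ lies in the two-dimensional span of $\{\omega^{1\bar 1 2\bar 2},\omega^{1\bar 1 3\bar 3}\}$ (the $\omega^1,\bar\omega^1$ generators always appear, because every summand of $d\omega^2$ and of $d\omega^3$ carries an $\omega^1$ or $\bar\omega^1$). Identifying which $x_{j\bar k}$ feed into each of the two target basis elements and collecting the coefficients yields $\partial\db\Omega=2x_{3\bar 3}\omega^{1\bar 1 2\bar 2}+2x_{2\bar 2}\omega^{1\bar 1 3\bar 3}$. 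The positivity of $\gamma_1(\Omega)$ follows at once: $-ix_{j\bar j}>0$ makes $x_{2\bar 2}^2+x_{3\bar 3}^2$ a negative real, while $\det(x_{j\bar k})$ is purely imaginary of definite sign, so that $i(x_{2\bar 2}^2+x_{3\bar 3}^2)/\det(x_{j\bar k})$ is a strictly positive real. The main obstacle is the sign-bookkeeping when rearranging wedge products of $(p,q)$-forms into canonical order; the dimensional collapse of the target of $\partial\db$ (to one dimension in (i), two in (ii)) is precisely what keeps the calculation tractable.
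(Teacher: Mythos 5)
Your proposal is correct and matches the paper's (omitted) argument, which is exactly the direct term-by-term computation you describe: only $x_{3\bar 3}\,\omega^{3\bar 3}$ contributes in case (i), reducing everything to $\partial\db\omega^{3\bar 3}$, and the sign analysis via $-i x_{j\bar j}>0$ and $i\det(x_{j\bar k})>0$ is right. The only small imprecision is your stated reason for the vanishing of the mixed terms $\omega^{3\bar k}$, $k\le 2$ (e.g.\ $\db\omega^{3\bar 1}=\db\omega^3\wedge\bar\omega^1$ carries no leftover $\omega^3$; it dies under $\partial$ because $\partial\db\omega^3=0$ and $\partial\bar\omega^1=0$), but this is absorbed by the term-by-term verification you commit to.
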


As an immediate consequence we get

\begin{proposition}\label{6dim-nilm}
Let $(M,J)$ be a 
complex nilmanifold of real dimension six endowed  with  an invariant complex structure $J$.
An invariant $J$-Hermitian metric on $M$
is $1$-st Gauduchon if and only if it is SKT.
\end{proposition}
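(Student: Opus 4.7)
The plan is to read off the proof directly from Lemma~\ref{partialOmega}, which has already done the bulk of the computational work by splitting the analysis according to whether the invariant complex structure $J$ is nilpotent or non-nilpotent (Proposition~\ref{J-red}). In each case the $1$-st Gauduchon condition $\partial\overline\partial\Omega\wedge\Omega=0$ and the SKT condition $\partial\overline\partial\Omega=0$ reduce to simple algebraic conditions on the structure constants $A,B,C,D,\epsilon,\rho$ and on the coefficients $x_{j\bar k}$ of the fundamental form~\eqref{2forma}, which I can then compare.

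In the nilpotent case (i), Lemma~\ref{partialOmega} shows that $\partial\overline\partial\Omega$ is a multiple of the single $(2,2)$-form $\omega^{1\bar 1 2\bar 2}$, with coefficient $x_{3\bar 3}\bigl(\rho+|B|^2+|C|^2-2(1-\epsilon)\,\Real(A\bar D)\bigr)$, and that $\frac{i}{2}\partial\overline\partial\Omega\wedge\Omega$ is a multiple of $\Omega^3$ with coefficient proportional to $x_{3\bar 3}^2/\det(x_{j\bar k})$ times the very same quantity $\rho+|B|^2+|C|^2-2(1-\epsilon)\,\Real(A\bar D)$. Since the positive-definiteness of $g$ forces $-ix_{3\bar 3}\in\R^+$ and $i\det(x_{j\bar k})>0$, the prefactor $x_{3\bar 3}^2/\det(x_{j\bar k})$ never vanishes. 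Hence the $1$-st Gauduchon condition and the SKT condition are both equivalent to the single scalar equation
\[
\rho+|B|^2+|C|^2-2(1-\epsilon)\,\Real(A\bar D)=0,
\]
which depends only on $J$ and not on the metric. This proves the equivalence in case (i).

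In the non-nilpotent case (ii), Lemma~\ref{partialOmega} gives $\partial\overline\partial\Omega=2x_{3\bar 3}\,\omega^{1\bar 1 2\bar 2}+2x_{2\bar 2}\,\omega^{1\bar 1 3\bar 3}$ and $\frac{i}{2}\partial\overline\partial\Omega\wedge\Omega=i(x_{2\bar 2}^2+x_{3\bar 3}^2)\Omega^3/\det(x_{j\bar k})$. Writing $x_{j\bar j}=ir_j$ with $r_j>0$ shows that $x_{2\bar 2}^2+x_{3\bar 3}^2=-r_2^2-r_3^2<0$, so the $1$-st Gauduchon condition fails for every invariant Hermitian metric; equivalently $\gamma_1(\Omega)>0$ as noted in the lemma. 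By the same positivity argument $x_{2\bar 2}$ and $x_{3\bar 3}$ are both nonzero, so $\partial\overline\partial\Omega\neq 0$ and no invariant SKT metric exists either. Thus the two conditions are vacuously equivalent in case (ii), and combining with case (i) completes the proof.

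I do not foresee a real obstacle: all the heavy lifting happens in Lemma~\ref{partialOmega}, and what remains is just comparing the vanishing of two expressions that share the same nonzero prefactor (case (i)) and observing sign obstructions from positive-definiteness (case (ii)).
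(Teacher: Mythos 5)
Your argument is correct and is exactly the route the paper takes: the paper derives Proposition~\ref{6dim-nilm} as an ``immediate consequence'' of Lemma~\ref{partialOmega}, and your write-up simply makes explicit the two observations involved (the shared nonvanishing prefactor in the nilpotent case, and the sign/nonvanishing obstructions from positive-definiteness in the non-nilpotent case). Nothing is missing.
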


The classification of nilmanifolds of real dimension $6$ admitting invariant SKT metric is given in \cite{FPS}.
It is clear from Lemma~\ref{partialOmega} that  $J$ has to be nilpotent and $\epsilon$ must be zero in order to have an invariant Hermitian
structure  $\Omega$ such that $\gamma_1(\Omega)<0$.
By \cite{U} it follows that if $\epsilon=0$ and $J$  is not bi-invariant then there is a basis $\{\omega^j\}_{j=1}^3$
such that
\begin{equation}\label{epsilonzero-red-new}
d \omega^1=d\omega^2=0,\quad\  d\omega^3=\rho\, \omega^{12} +
\omega^{1\bar{1}} + B\,\omega^{1\bar{2}} + (x+iy)\,\omega^{2\bar{2}},
\end{equation}
where $B,x+iy\in \mathbb{C}$, and $\rho= 0,1$.
The underlying  real  Lie algebra  $\mathfrak g$ is isomorphic to one of the following Lie algebras
$$
\begin{array}{ll}
\frh_2 =  (0,0,0,0,12,34)&
\frh_3 =  (0,0,0,0,0,12+34)\\[4pt]
\frh_4 =(0,0,0,0,12,14+23) &
\frh_5 =(0,0,0,0,13 + 42, 14 + 23)\\[4 pt]
\frh_6 =(0,0,0,0,12,13)&
\frh_8 =(0,0,0,0,0,12),
\end{array}
$$
where for instance by $(0,0,0,0,0,12)$ we denote the Lie algebra with structure equations $d e^j =0, j = 1, \dots, 5$, $d e^6 = e^1 \wedge e^2$.
The classes of isomorphisms can be distinguished by the  following conditions:
\begin{enumerate}
\item[{\rm (a)}] If $|B|=\rho$, then the Lie algebra $\frg$ is
isomorphic to
\begin{enumerate}
\item[{\rm (a1)}] $\frh_2$, for $y\not=0$;
\item[{\rm (a2)}]
$\frh_3$, for $\rho=y=0$ and $x\not=0$;
\item[{\rm (a3)}] $\frh_4$,
for $\rho=1$, $y=0$ and $x\not=0$;
\item[{\rm (a4)}] $\frh_6$, for
$\rho=1$ and $x=y=0$;
\item[{\rm (a5)}] $\frh_8$, for $\rho=x=y=0$.
\end{enumerate}
\item[{\rm (b)}] If $|B|\not=\rho$, then the Lie algebra $\frg$
is isomorphic to
\begin{enumerate}
\item[{\rm (b1)}] $\frh_2$, for $4y^2 >
(\rho-|B|^2)(4x+\rho-|B|^2)$;
\item[{\rm (b2)}] $\frh_4$, for
$4y^2 = (\rho-|B|^2)(4x+\rho-|B|^2)$;
\item[{\rm (b3)}] $\frh_5$,
for $4y^2 < (\rho-|B|^2)(4x+\rho-|B|^2)$.
\end{enumerate}
\end{enumerate}

\begin{remark}\label{Iwasawa}
{\rm Notice that $\frh_5$ is the Lie algebra underlying the Iwasawa manifold. The bi-invariant complex structure $J_0$
on $\frh_5$ corresponds to $\rho=1$ and $\epsilon=A=B=C=D=0$ in \eqref{nilpotentJ}. It is proved in \cite{FWW}
that the natural balanced metric on the Iwasawa manifold has $\gamma_1>0$, and by Lemma~\ref{partialOmega}~(i) we see that
the same holds for any other invariant $J_0$-Hermitian structure $\Omega$ on the Iwasawa manifold.
}
\end{remark}

\begin{proposition}\label{6dim-nilm-Gaud}
Let $(M,J)$ be a complex nilmanifold  of real dimension $6$ endowed with an invariant complex structure $J$.
Then, there is an invariant $J$-Hermitian metric $\Omega$ such that $\gamma_1(\Omega)<0$ if and only if $M$ corresponds to
$\frh_2,\frh_3,\frh_4$ or $\frh_5$.
\end{proposition}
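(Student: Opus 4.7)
The plan is to combine Lemma~\ref{partialOmega} with the classification of invariant complex structures on six-dimensional nilmanifolds recalled in Proposition~\ref{J-red}. By part (ii) of that lemma, every invariant $J$-Hermitian metric associated with a non-nilpotent $J$ has $\gamma_1(\Omega)>0$, so I can restrict to nilpotent $J$. By part (i), for nilpotent $J$ the sign of $\gamma_1(\Omega)$ is independent of the chosen invariant metric and coincides with that of
$$
\lambda(J) \,:=\, \rho + |B|^2 + |C|^2 - 2(1-\epsilon)\,\Real(A\bar D),
$$
which is non-negative when $\epsilon=1$. When $\epsilon=0$ and $J$ is bi-invariant, $M$ is either abelian $\frh_1$ (Kähler, so $\gamma_1=0$) or the Iwasawa manifold on $\frh_5$ (where $\gamma_1>0$ by Remark~\ref{Iwasawa}). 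Thus I may assume $\epsilon=0$ and $J$ not bi-invariant.

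Under these assumptions the normalized equations \eqref{epsilonzero-red-new} give $A=1$, $C=0$, $D=x+iy$, so $\lambda(J)=\rho+|B|^2-2x$, and $M$ falls under one of the sub-cases (a1)--(a5) or (b1)--(b3). For the ``only if'' direction, case~(a4) corresponds precisely to $\frh_6$, forcing $\rho=|B|=1$ and $x=0$, hence $\lambda=2>0$; case~(a5) corresponds precisely to $\frh_8$, forcing $\rho=B=x=0$, hence $\lambda=0$. Since $\frh_6$ and $\frh_8$ do not appear in any other sub-case, no invariant $J$-Hermitian metric on them can satisfy $\gamma_1(\Omega)<0$. For the ``if'' direction I exhibit parameters producing $\lambda<0$ in each of the four target Lie algebras: $\rho=B=0$, $y=1$, $x=1$ via (a1) for $\frh_2$; $\rho=y=B=0$, $x=1$ via (a2) for $\frh_3$; $\rho=1$, $|B|=1$, $y=0$, $x=2$ via (a3) for $\frh_4$; and $\rho=1$, $B=y=0$, $x=2$ via (b3) for $\frh_5$.

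The main obstacle is only the bookkeeping of compatibility between the sign condition $2x>\rho+|B|^2$ and the structural constraints of each sub-case; each verification is a one-line inequality. The least immediate is $\frh_5$, where one must also check the strict inequality $4y^2<(\rho-|B|^2)(4x+\rho-|B|^2)$; with the parameters above this reduces to $0<9$, which is automatic.
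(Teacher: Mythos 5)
Your proposal is correct and follows essentially the same route as the paper: reduce to nilpotent $J$ with $\epsilon=0$ via Lemma~\ref{partialOmega}, translate $\gamma_1(\Omega)<0$ into the inequality $2x>\rho+|B|^2$ for the reduced equations \eqref{epsilonzero-red-new}, exclude $\frh_6$ and $\frh_8$ because they force $x=0$, and realize the remaining four algebras through the sub-cases (a1)--(b3). The only difference is cosmetic: you exhibit explicit parameter values (all of which check out, including the $\frh_5$ inequality $0<9$), whereas the paper argues that suitable $x,y$ can always be chosen.
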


\begin{proof}
If $J$ is bi-invariant then
$\gamma_1 (\Omega)$ cannot be negative, for any invariant $\Omega$.
By the previous discussion $J$ must be nilpotent and there exists a basis of invariant $(1,0)$-forms satisfying the reduced equations \eqref{epsilonzero-red-new}. From Lemma \ref{partialOmega} (i) it follows that there is an invariant $J$-Hermitian structure $\Omega$
such that $\gamma_1(\Omega)<0$ if and only if
\begin{equation} \label{ineqx}
2x>\rho+|B|^2.
\end{equation}
Since this implies $x>0$, the Lie algebras $\frh_6$ and $\frh_8$ are excluded because they correspond to cases
(a4) and (a5) above. Given $\rho$ and $B$, we can choose $x, y \in \mathbb{R}$ satisfying
\eqref{ineqx} and one of the conditions  (b1), (b2) or (b3). This shows  the  existence of a $\Omega$ with $\gamma_1 (\Omega) < 0$
on  the Lie algebras $\frh_2,\frh_4$ and $\frh_5$. Finally, the result for $\frh_3$ follows from (a2).
\end{proof}

Given a complex nilmanifold $(M,J)$ with invariant $J$, the existence of an SKT or a balanced Hermitian
metric implies the existence of an invariant one obtained
by the symmetrization process \cite{FG}. From Propositions~\ref{curve} and~\ref{6dim-nilm-Gaud} we conclude:

\begin{theorem}\label{existence-6dim-nilm}
Let $M$ be a nilmanifold  of real dimension $6$ with underlying Lie algebra isomorphic to  $\frh_2,\frh_3,\frh_4$ or $\frh_5$.
Then, there is an invariant complex structure $J$ on $M$ admitting a $($non invariant$)$ $1$-st Gauduchon metric and having no $J$-Hermitian SKT metrics.
\end{theorem}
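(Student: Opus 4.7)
The plan is to combine Propositions~\ref{curve} and~\ref{6dim-nilm-Gaud} through an intermediate value argument on the constant $\gamma_1$, after selecting the invariant complex structure $J$ so that SKT metrics are forbidden altogether.

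First I would pick, for each Lie algebra in $\{\frh_2,\frh_3,\frh_4,\frh_5\}$, parameters in the reduced equations \eqref{epsilonzero-red-new} realizing it and satisfying the strict inequality $2x>\rho+|B|^2$. For such a $J$, Lemma~\ref{partialOmega}(i) shows that every invariant $J$-Hermitian form $\Omega$ yields a strictly negative $\gamma_1(\Omega)$; in particular $\partial\overline\partial\Omega\neq 0$ for every invariant $\Omega$, so no invariant $J$-Hermitian metric is SKT. By the symmetrization principle of~\cite{FG} (already invoked just above the theorem), the presence of any compatible SKT metric would force the existence of an invariant one, which is impossible; hence no $J$-Hermitian SKT metric exists on $M$. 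The realizability of all four Lie algebras under the strict inequality is a case-by-case check against the list (a1)--(a5),(b1)--(b3): one may take, for instance, $\rho=B=0$, $x>0$, $y\neq 0$ for $\frh_2$ (case (a1)); $\rho=B=y=0$, $x>0$ for $\frh_3$ (case (a2)); $\rho=|B|=1$, $y=0$, $x>1$ for $\frh_4$ (case (a3)); and $\rho=1$, $B=y=0$, $x=1$ for $\frh_5$ (case (b3)).

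Next, Proposition~\ref{curve} provides on the compact three-dimensional complex manifold $(M,J)$ some Hermitian metric $\tilde g$ with $\gamma_1(\tilde\Omega)>0$, which must be non-invariant by the preceding paragraph. Fixing any invariant $J$-Hermitian form $\Omega_0$, I would connect them by the affine path $\Omega_t=(1-t)\Omega_0+t\tilde\Omega$ for $t\in[0,1]$; convexity of the cone of positive $(1,1)$-forms makes each $\Omega_t$ a Hermitian form, and by the smooth dependence of $\gamma_1$ on $\Omega$ recorded in~\cite{FWW}, the map $t\mapsto\gamma_1(\Omega_t)$ is continuous with $\gamma_1(\Omega_0)<0<\gamma_1(\Omega_1)$. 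The intermediate value theorem then supplies $t_0\in(0,1)$ with $\gamma_1(\Omega_{t_0})=0$, and \cite[Corollary~10]{FWW} promotes this to a $1$-st Gauduchon metric in the conformal class of $\Omega_{t_0}$. This $1$-st Gauduchon metric cannot be invariant: Proposition~\ref{6dim-nilm} would then make it SKT, contradicting the nonexistence established in the first step.

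The only real obstacle I anticipate is the continuous dependence of $\gamma_1$ on the Hermitian form needed for the intermediate value step; the paper cites~\cite{FWW} for smooth dependence, so this reduces to invoking the reference. The remaining verification --- that the chosen parameters do sit in the correct cases (a$i$)/(b$i$) of the classification --- is a routine inspection.
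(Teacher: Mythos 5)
Your proposal is correct and follows essentially the same route as the paper: Proposition~\ref{6dim-nilm-Gaud} supplies an invariant $\Omega$ with $\gamma_1<0$ (and Lemma~\ref{partialOmega}(i) plus the symmetrization of \cite{FG} rule out SKT metrics), Proposition~\ref{curve} supplies a metric with $\gamma_1>0$, and the two are combined to produce a necessarily non-invariant $1$-st Gauduchon metric. The only difference is that you unpack \cite[Corollary~10]{FWW} into an explicit intermediate-value argument along an affine path of Hermitian forms, whereas the paper simply cites that corollary; your explicit parameter choices for the cases (a1)--(b3) all check out.
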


The following example gives a deformation of an SKT structure   in a family of complex structures which do not admit any compatible SKT metric, but they have compatible $1$-st Gauduchon metrics.

\begin{example}\label{h4-deformation}
{\rm Let $M$ be a nilmanifold with underlying Lie algebra
$\frh_4$, and let $\{e^1,\ldots,e^6\}$
be a basis of invariant  1-forms on $M$ such that
$$
de^1=de^2=de^3=de^4=0,\quad de^5=e^{12},\quad de^6=e^{14}+e^{23}.
$$
We consider the family of complex structures $J_t$, $t\not=0$, defined by
$$
\omega^1=e^1+i\, e^4,\quad \omega^2=e^2+i\, t(e^3-e^4),\quad
\omega^3=2(e^5-i\, e^6).
$$
Since the complex equations for $J_t$ are
\begin{equation}\label{ecus-example}
d\omega^1=d\omega^2=0,\quad d\omega^3=\omega^{12}+\omega^{1\bar{1}}
+\omega^{1\bar{2}}+\frac{1}{t}\omega^{2\bar{2}},
\end{equation}
by {\rm (a3)} above the complex structure $J_t$ is defined on $M$ for any $t\not=0$. Moreover, by {\rm Lemma~\ref{partialOmega}~(i)}
the complex nilmanifold $(M,J_t)$ has a compatible SKT metric if and only if $t=1$, and admits an invariant Hermitian metric $\Omega$
with $\gamma_1(\Omega)<0$ if and only if $t<1$.

Therefore, $J_t$, $t\in (0,1]$, is a deformation of the complex structure
$J_1$ such that the complex manifold $(M,J_t)$
has a $1$-st Gauduchon metric for any $t$ but admits a compatible SKT metric only for $t=1$.

On the other hand, because of the form of the complex equations \eqref{ecus-example}, the fundamental form of any invariant $J_t$-Hermitian metric is equivalent to one given by \eqref{2forma} with $x_{1\bar{3}}=x_{2\bar{3}}=0$. Then, the balanced condition $d\Omega^2=0$ reduces to
$$
x_{1\bar{1}}+t\, x_{2\bar{2}} = t\, x_{1\bar{2}}.
$$
Since $x_{1\bar{1}}=i\, \lambda$ and $x_{2\bar{2}}=i\, \mu$, for some $\lambda,\mu>0$, we have that $x_{1\bar{2}}=i(\mu+\lambda/t)$. This implies that
$$
\det \left(\!\!\! \begin{array}{cc}
x_{1\bar{1}} & x_{1\bar{2}} \\
-\overline{x_{2\bar{1}}} & x_{2\bar{2}}
\end{array} \!\right)\! = (\mu+\lambda/t)^2 -\lambda\mu = \mu^2 +\frac{2-t}{t}\lambda\, \mu + \frac{\lambda^2}{t^2}
$$
is positive for any $t\in (0,1]$, which is a contradiction to the positive definiteness of the metric. By \cite{FG} we conclude that for any
$t\in (0,1]$ the complex manifold $(M,J_t)$ does not admit any balanced (invariant or not) metric.

}
\end{example}

\section{Products of Sasakian manifolds and circle bundles} \label{products}

Here we construct $1$-st Gauduchon metrics on products of Sasakian manifolds and on certain circle bundles over quasi-Sasakian manifolds.

We remind that an almost contact metric manifold  $(N^{2n - 1},
\varphi, \xi, \eta, g)$ is called {\em quasi-Sasakian} if it is normal and its
fundamental form $\Phi ( \cdot , \cdot) = g ( \varphi \cdot, \cdot )$  is closed.
If in particular $d\eta=\Phi$
then the almost contact metric  structure is said {\em Sasakian}.

Let $(M_1, \varphi_1, \xi_1, \eta_1, g_1)$ and $(M_2, \varphi_2,\xi_2, \eta_2, g_2)$ be two Sasakian manifolds of dimension $2n_1+1$ and $2n_2+1$, respectively.
On the product manifold $M=M_1\times M_2$ we consider the family of  complex structures  $J$ given by
Tsukada \cite{Tsukada}  and used in  \cite{M} to construct astheno-K\"ahler structures:
$$
J (X,Y)=\left( \varphi_1(X)-\frac{a}{b} \eta_1(X) \xi_1 -\frac{a^2+b^2}{b} \eta_2(Y) \xi_1,
\varphi_2(Y)+\frac{1}{b} \eta_1(X) \xi_2 +\frac{a}{b} \eta_2(Y) \xi_2 \right),
$$
for $X\in \mathfrak{X}(M_1)$ and $Y\in \mathfrak{X}(M_2)$, where $a, b \in \R$ and $b \neq 0$.

For any $t\in \mathbb{R}^*$  such that $ \frac{t}{b} > 0$ we consider on $M$ the $J$-Hermitian metric \begin{equation} \label{omprodSas} \Omega= \Phi_1+\Phi_2+t\eta_1\wedge\eta_2, \end{equation}
where $\Phi_j$ denotes the fundamental 2-form on $(M_j, \varphi_j, \xi_j, \eta_j, g_j)$ for each $j=1,2$.

\begin{theorem}\label{sasaki-prod}
Let us suppose that $2n=2(n_1+n_2+1)>6$. The Hermitian structure $\Omega$ given by \eqref{omprodSas}  on $M$ is $1$-st Gauduchon if and only if it is astheno-K\"ahler
if and only if $n_1(n_1-1)+2an_1n_2+(a^2+b^2)n_2(n_2-1)=0$.
\end{theorem}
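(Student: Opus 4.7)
My plan is to reduce both conditions---$1$-st Gauduchon and astheno-K\"ahler---to the vanishing of a single scalar depending on $n_1,n_2,a,b$, and to show that this scalar is precisely $n_1(n_1-1) + 2an_1n_2 + (a^2+b^2)n_2(n_2-1)$. First I would compute $d\Omega$: since both $M_j$ are Sasakian, $d\Phi_j = 0$ and $d\eta_j = \Phi_j$, so
\[
d\Omega \,=\, t\,d(\eta_1\wedge\eta_2) \,=\, t(\Phi_1\wedge\eta_2 - \eta_1\wedge\Phi_2).
\]
To get $\partial\bar\partial\Omega$ I would decompose $\eta_j = \eta_j^{1,0}+\eta_j^{0,1}$ by diagonalising the explicit $2\times 2$ matrix of $J^*$ on $\mathrm{span}(\eta_1,\eta_2)$. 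Its eigenvalues are $\pm i$, so both $\eta_1^{1,0}$ and $\eta_2^{1,0}$ are scalar multiples of a single $(1,0)$-form $\eta^{1,0}$; in particular $\eta_j^{1,0}\wedge\eta_k^{1,0}=0$. Combined with $\partial\Phi_j=\bar\partial\Phi_j=0$, this yields the closed-form expression
\[
\partial\bar\partial\Omega \,=\, \frac{it}{2b}\,P, \qquad P \,:=\, \Phi_1^2 + 2a\,\Phi_1\wedge\Phi_2 + (a^2+b^2)\,\Phi_2^2.
\]

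The crux of the proof is the identity
\[
\bar\partial\Omega \wedge \partial\Omega \,=\, t\,\partial\bar\partial\Omega \wedge \eta_1\wedge\eta_2,
\]
which I would verify by writing $\partial\Omega,\bar\partial\Omega$ in terms of $\eta_j^{1,0},\eta_j^{0,1}$ and using $\eta_j^{1,0}\wedge\eta_k^{1,0}=0$ to collapse all four $\eta$-wedge products into multiples of $\eta_1\wedge\eta_2$ whose $\Phi$-coefficients reassemble exactly into $P$. Substituting into the Leibniz expansion
\[
\partial\bar\partial\Omega^{n-2} \,=\, (n-2)\partial\bar\partial\Omega\wedge\Omega^{n-3} - (n-2)(n-3)\,\bar\partial\Omega\wedge\partial\Omega\wedge\Omega^{n-4}
\]
and using $\Omega^k = (\Phi_1+\Phi_2)^k + kt\,(\Phi_1+\Phi_2)^{k-1}\wedge\eta_1\wedge\eta_2$ (which follows from $(\eta_1\wedge\eta_2)^2=0$), the $\eta_1\wedge\eta_2$ contributions cancel and the astheno-K\"ahler condition simplifies to
\[
\partial\bar\partial\Omega \wedge (\Phi_1+\Phi_2)^{n-3} \,=\, 0.
\]

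The $1$-st Gauduchon condition reduces to the same equation: the purely horizontal term $\partial\bar\partial\Omega\wedge(\Phi_1+\Phi_2)^{n-2}$ vanishes for dimensional reasons (since $\Phi_1,\Phi_2$ live on the $(2n-2)$-dimensional horizontal distribution while the product has degree $2n$), leaving
\[
\partial\bar\partial\Omega \wedge \Omega^{n-2} \,=\, (n-2)t\,\partial\bar\partial\Omega \wedge (\Phi_1+\Phi_2)^{n-3}\wedge\eta_1\wedge\eta_2.
\]
It then remains to expand $\partial\bar\partial\Omega \wedge (\Phi_1+\Phi_2)^{n-3}$ by the binomial theorem; since $\Phi_j^{n_j+1}=0$, only three terms survive (those that produce $\Phi_1^{n_1}\wedge\Phi_2^{n_2}$), and collecting the multinomial coefficients factors out precisely
\[
\frac{(n-3)!}{n_1!\,n_2!}\,\bigl[\,n_1(n_1-1) + 2a\,n_1n_2 + (a^2+b^2)\,n_2(n_2-1)\,\bigr].
\]
As $\Phi_1^{n_1}\wedge\Phi_2^{n_2}\wedge\eta_1\wedge\eta_2$ is a nowhere-vanishing volume form on $M$, both conditions are equivalent to the vanishing of this scalar. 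The main obstacle is establishing the identity $\bar\partial\Omega\wedge\partial\Omega = t\,\partial\bar\partial\Omega\wedge\eta_1\wedge\eta_2$: without it, the SKT-type and astheno-K\"ahler-type expressions would remain a priori inequivalent.
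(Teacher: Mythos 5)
Your argument is correct, and it differs from the paper's in one substantive respect. The paper only computes the $1$-st Gauduchon condition directly: it obtains $dd^c\Omega=\frac{t}{b}\bigl(\Phi_1^2+2a\,\Phi_1\wedge\Phi_2+(a^2+b^2)\,\Phi_2^2\bigr)$ from $d^c=JdJ$ and the explicit action of $J$ on $\eta_1,\eta_2$, wedges with $\Omega^{n-2}$, extracts the coefficient of $\Phi_1^{n_1}\wedge\Phi_2^{n_2}\wedge\eta_1\wedge\eta_2$, and then cites Matsuo's Theorem 4.1 for the fact that the very same scalar governs the astheno-K\"ahler condition. You instead reprove the astheno-K\"ahler characterization from scratch, and your identity $\bar\partial\Omega\wedge\partial\Omega=t\,\partial\bar\partial\Omega\wedge\eta_1\wedge\eta_2$ is exactly the structural reason the two conditions coincide on these products: it checks out (normalizing $\mu=\eta_2^{1,0}$ one finds $\eta_1^{1,0}=(-a+ib)\mu$, so $\partial\Omega=t\bigl(\Phi_1-(-a+ib)\Phi_2\bigr)\wedge\mu$ and both sides equal $-t^2P\wedge\mu\wedge\bar\mu$ with $P=\Phi_1^2+2a\,\Phi_1\wedge\Phi_2+(a^2+b^2)\,\Phi_2^2$), and the proportionality of $\eta_1^{1,0}$ and $\eta_2^{1,0}$ coming from the trace-zero, determinant-one action of $J$ on $\mathrm{span}(\eta_1,\eta_2)$ is a legitimate alternative to the paper's $JdJ$ computation of $\partial\bar\partial\Omega$. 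The remainder (the vanishing of the purely horizontal top-degree term, the binomial extraction of $C(n,n_2)=\frac{(n-3)!}{n_1!\,n_2!}\bigl[n_1(n_1-1)+2an_1n_2+(a^2+b^2)n_2(n_2-1)\bigr]$, and the nonvanishing of $\Phi_1^{n_1}\wedge\Phi_2^{n_2}\wedge\eta_1\wedge\eta_2$) matches the paper's computation. The net effect of your route is a self-contained proof that both conditions reduce to $P\wedge(\Phi_1+\Phi_2)^{n-3}=0$, at the cost of some extra type-decomposition bookkeeping; the paper's proof is shorter but leans on an external reference for the astheno-K\"ahler half of the statement.
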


\begin{proof}
Since
$d\Omega=t(\Phi_1\wedge\eta_2-\eta_1\wedge \Phi_2)$, we have that
$$d^c\Omega=JdJ\Omega=Jd\Omega=tJ(\Phi_1\wedge\eta_2-\eta_1\wedge \Phi_2).$$
The definition of the complex structure implies that $J\eta_1=\frac{a}{b}\eta_1+\frac{a^2+b^2}{b}\eta_2$ and $J\eta_2=-\frac{1}{b}\eta_1-\frac{a}{b}\eta_2$, which implies
$$
d^c\Omega=t(\Phi_1\wedge J\eta_2-J\eta_1\wedge \Phi_2)=-\frac{t}{b} [\Phi_1\wedge \eta_1 + a \Phi_1\wedge \eta_2 + a \eta_1\wedge \Phi_2
+(a^2+b^2)\eta_2\wedge \Phi_2].
$$
Thus, $dd^c\Omega=\frac{t}{b} [\Phi_1\wedge \Phi_1 + 2a \Phi_1\wedge \Phi_2 +(a^2+b^2)\Phi_2\wedge \Phi_2]$.

On the other hand,
$$
\begin{array}{lcl}
\Omega^{n-2} \zzz & = &\zzz \displaystyle\sum_{r=0}^{n-2} \left(\!\!\!\begin{array}{c}
n-2 \\
r
\end{array}\!\!\!\right) (\Phi_1+\Phi_2)^{n-2-r}\wedge (t\eta_1\wedge\eta_2)^r\\[10pt]
\zzz & = &\zzz (\Phi_1+\Phi_2)^{n-3}\wedge \left[ \Phi_1+\Phi_2+t(n-2)\eta_1\wedge\eta_2 \right]\\[4pt]
\zzz & = &\zzz \displaystyle\sum_{s=0}^{n-3} \left(\!\!\!\begin{array}{c}
n-3 \\
s
\end{array}\!\!\!\right) \Phi_1^{n-3-s}\wedge \Phi_2^s\wedge \left[ \Phi_1+\Phi_2+t(n-2)\eta_1\wedge\eta_2 \right],
\end{array}
$$
because $(\eta_1\wedge\eta_2)^r$ is non-zero only for $r=0,1$.
Therefore,
$$
\begin{array}{lcl}
dd^c\Omega\wedge\Omega^{n-2} \zzz & = &\zzz \frac{t}{b} \displaystyle\sum_{s=0}^{n-3} \left(\!\!\!\begin{array}{c}
n-3 \\
s
\end{array}\!\!\!\right) \left[ \Phi_1^2 + 2a \Phi_1\wedge \Phi_2 +(a^2+b^2)\Phi_2^2 \right] \wedge \\[12pt]

&  & \Phi_1^{n-3-s}\wedge \Phi_2^s\wedge \left[ \Phi_1+\Phi_2+t(n-2)\eta_1\wedge\eta_2 \right]\\[8pt]
\zzz & = &\zzz \frac{t}{b} \displaystyle\sum_{s=0}^{n-1} C(n,s) \Phi_1^{n-1-s}\wedge \Phi_2^s\wedge [\Phi_1+\Phi_2+t(n-2)\eta_1\wedge\eta_2],
\end{array}
$$
where $C(n,0)=1$, $C(n,1)=n-3+2a$, $C(n,n-2)=2a+(a^2+b^2)(n-3)$, $C(n,n-1)=a^2+b^2$, and
$$C(n,s)=\left(\!\!\!\begin{array}{c}
n-3 \\
s
\end{array}\!\!\!\right) +2a \left(\!\!\!\begin{array}{c}
n-3 \\
s-1
\end{array}\!\!\!\right) + (a^2+b^2) \left(\!\!\!\begin{array}{c}
n-3 \\
s-2
\end{array}\!\!\!\right),
$$
for $2\leq s\leq n-3$.

Now, by the same argument as in \cite{M} we get that
$$
\begin{array}{lcl}
dd^c\Omega\wedge\Omega^{n-2} \zzz & = &\zzz \frac{t}{b} C(n,n_2) \Phi_1^{n_1}\wedge \Phi_2^{n_2}\wedge \left[ \Phi_1+\Phi_2+t(n-2)\eta_1\wedge\eta_2 \right]\\[4pt]
\zzz & = &\zzz \frac{t^2}{b}(n-2) C(n,n_2) \Phi_1^{n_1}\wedge \Phi_2^{n_2}\wedge\eta_1\wedge\eta_2.
\end{array}
$$
It is clear that $\Omega$ is $1$-st Gauduchon if and only if $C(n,n_2)=0$
which is equivalent to  $n_1(n_1-1)+2an_1n_2+(a^2+b^2)n_2(n_2-1)=0.$ The latter condition is also equivalent to $\Omega$ be astheno-K\"ahler by \cite[Theorem 4.1]{M}.
\end{proof}

\begin{remark}\label{sign-in-product}
{\rm By a similar calculation as in the proof of the previous theorem, for $n>3$ we get that
$$\Omega^n=n t \, \left [  \left ( \begin{array}{c} n - 3\\ n_2 \end{array} \right ) + 2  \left ( \begin{array}{c} n - 3\\ n_2 - 1 \end{array} \right ) + \left ( \begin{array}{c} n - 3\\ n_2 - 2 \end{array} \right )\right ]\Phi_1^{n_1}\wedge \Phi_2^{n_2}\wedge\eta_1\wedge\eta_2$$
and using $dd^c\Omega=-2i\partial\db\Omega$ we conclude that
if the metric $\Omega$ is not $1$-st Gauduchon then
$$
\frac{i}{2}\partial\db\Omega \wedge\Omega^{n-2}=\frac{(n-2) t}{n b}  \left [   \frac{ n_1 (n_1 - 1) + 2 n_1 n_2 a + n_2 (n_2 - 1) (a^2 + b^2)} { n_1 (n_1 - 1) + 2 n_1 n_2 + n_2 (n_2 - 1) }  \right ] \Omega^n.
$$
}
\end{remark}

\begin{proposition}\label{sasaki-prod-dim6}
Let us suppose that $n_1 = n_2 =3$, that is, the real dimension of $M$ is equal to $2n=6$.
Then, the Hermitian metric $\Omega$ given by \eqref{omprodSas}  is $1$-st Gauduchon if and only if it is SKT if and only if $a=0$.
Moreover, $\gamma_1(\Omega)<0$ if and only if $a<0$, which implies the existence of a $1$-st Gauduchon metric
on $(M, J_a)$ for any $a<0$ whenever the manifold is compact.
\end{proposition}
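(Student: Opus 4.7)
The plan is to specialize the calculation from the proof of Theorem~\ref{sasaki-prod} to the case $n_1 = n_2 = 1$, in which each factor is a three-dimensional Sasakian manifold. The crucial new input is that $\Phi_j\wedge\Phi_j = 0$ for $j=1,2$, since $\Phi_j$ is a $2$-form on a three-manifold; this collapses the generic formulas dramatically.

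First, the identity $dd^c\Omega = \frac{t}{b}\bigl[\Phi_1\wedge\Phi_1 + 2a\,\Phi_1\wedge\Phi_2 + (a^2+b^2)\Phi_2\wedge\Phi_2\bigr]$ derived in the proof of Theorem~\ref{sasaki-prod} reduces, after dropping the vanishing squares, to
\[
dd^c\Omega = \frac{2at}{b}\,\Phi_1\wedge\Phi_2.
\]
This makes the SKT equivalence immediate: $dd^c\Omega = 0$ iff $a = 0$. For the $1$-st Gauduchon condition I would wedge with $\Omega = \Phi_1+\Phi_2+t\,\eta_1\wedge\eta_2$; every term except the one involving $\Phi_1\wedge\Phi_2\wedge t\,\eta_1\wedge\eta_2$ vanishes because $\Phi_j^2 = 0$, giving
\[
dd^c\Omega\wedge\Omega = \frac{2at^2}{b}\,\Phi_1\wedge\Phi_2\wedge\eta_1\wedge\eta_2,
\]
which is a nonzero multiple of the volume form precisely when $a\neq 0$. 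Hence both SKT and $1$-st Gauduchon reduce to $a=0$.

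For the sign of $\gamma_1(\Omega)$, the same type of expansion of $\Omega^3$ leaves only the term $6t\,\Phi_1\wedge\Phi_2\wedge\eta_1\wedge\eta_2$. Combining this with the identity $\tfrac{i}{2}\partial\db\Omega = \tfrac14\,dd^c\Omega$ yields the proportionality
\[
\tfrac{i}{2}\partial\db\Omega\wedge\Omega = \frac{at}{12\,b}\,\Omega^3,
\]
whose sign, since $t/b > 0$, coincides with the sign of $a$. So $\gamma_1(\Omega)<0$ if and only if $a<0$. When $a<0$ and $M$ is compact, this explicit metric $\Omega$ realises $\gamma_1<0$ on $(M,J_a)$; combining Proposition~\ref{curve} with \cite[Corollary~10]{FWW}, as in the discussion following Proposition~\ref{curve}, one then concludes the existence of a $1$-st Gauduchon metric.

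The only genuinely delicate part is sign-tracking: the conventions $dd^c = 2i\,\partial\db$, the factor $\tfrac14$ converting $dd^c$ to $\tfrac{i}{2}\partial\db$, and the positivity $t/b>0$ forced by the metric all feed into the final sign of $\gamma_1$. Once these are pinned down the argument is a short direct calculation whose only special ingredient is $\Phi_j\wedge\Phi_j = 0$ in dimension three.
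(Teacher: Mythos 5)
Your proof is correct and follows essentially the same route as the paper's: specialize the computation from Theorem~\ref{sasaki-prod} to three-dimensional factors (the statement's $n_1=n_2=3$ is evidently a typo for $n_1=n_2=1$), use $\Phi_j\wedge\Phi_j=0$ to collapse $dd^c\Omega$ to $\frac{2at}{b}\Phi_1\wedge\Phi_2$, wedge with $\Omega$, compare against $\Omega^3=6t\,\Phi_1\wedge\Phi_2\wedge\eta_1\wedge\eta_2$, and invoke Proposition~\ref{curve} together with Corollary~10 of \cite{FWW} for the last assertion. The only (immaterial) difference is the overall constant: you obtain $\frac{at}{12b}\,\Omega^3$ where the paper writes $\frac{at}{3b}\,\Omega^3$, a discrepancy traceable to the normalization relating $dd^c$ and $\frac{i}{2}\partial\overline{\partial}$, but since only the sign of the proportionality factor matters for determining $\gamma_1(\Omega)$ (and $t/b>0$ in both accounts), the conclusion is unaffected.
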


\begin{proof}
Since $M_j$ is 3-dimensional, $dd^c\Omega=\frac{2at}{b}\Phi_1\wedge \Phi_2$ and
$$dd^c\Omega\wedge \Omega= \frac{2at^2}{b}  \Phi_1\wedge \Phi_2\wedge \eta_1\wedge\eta_2.$$ Since $\Omega^3=6t \Phi_1\wedge \Phi_2\wedge \eta_1\wedge\eta_2$
and $dd^c\Omega=-2i\partial\db\Omega$, we conclude that
$$\frac{i}{2}\partial\db\Omega \wedge\Omega=\frac{a t}{3 b} \Omega^3.$$ The last assertion follows from Proposition~\ref{curve} and Corollary 10 in \cite{FWW}.
\end{proof}

Next we construct $1$-st Gauduchon metrics on circle bundles over certain almost contact metric manifolds.
Let $(N, \varphi, \xi, \eta, g)$ be a $(2n-1)$-dimensional almost contact metric manifold and
let $F$ be a closed $2$-form on $N$ which represents an
integral cohomology class on $N$. From the well known result of
Kobayashi \cite{Kob}, we can consider the circle bundle $S^1
\hookrightarrow P \to N$, with connection $1$-form $\theta$ on $P$
whose curvature form is $d\theta = \pi^*(F)$, where $\pi: P \to
N$ is the projection.

By using a normal almost contact structure $(\varphi, \xi, \eta)$ on $N$ and a  connection
$1$-form $\theta$ on $P$ such that $$ F  (\varphi X, Y) + F (X,  \varphi Y) =0, \quad  F (\xi, X)=0, \quad \forall X, Y \in \chi (N), $$ one can define  a complex structure $J$ on $P$ as
follows (see \cite{Og}). For any
right-invariant vector field $X$ on $P$,
$J X$ is given by
\begin{equation} \label{acxS1}
\theta (JX) = - \pi^*( \eta (\pi_* X)),\quad\
\pi_* (JX) = \varphi (\pi_* X)  + \tilde \theta (X) \xi,
\end{equation}
where $\tilde \theta (X)$ is the unique function on $N$ such that
$\pi^* \tilde  \theta (X) =  \theta (X)$.

The above definition can be extended  to arbitrary vector fields $X$
on $P$, since $X$ can be written in the form
$
X = \sum_j f_j X_j,
$
with $f_j$ smooth functions on $P$ and $X_j$ right-invariant vector
fields. Then $JX = \sum_j f_j  J X_j$.

Moreover,   a Riemannian metric  $h$ on $P$ compatible
with $J$ (see \cite{Og}) is given by
\begin{equation} \label{acxS2-1}
\begin{array} {l}
h(X, Y) = \pi^* g( \pi_* X,  \pi_* Y) +  \theta(X)  \theta(Y),
\end{array}
\end{equation}
for any right-invariant vector fields $X, Y$.

If $(N, \varphi, \xi, ,\eta, g)$ is  quasi- Sasakian it was shown if \cite{FFUV}  that $h$ is SKT if and only if
$$
d \eta \wedge d \eta + F \wedge F =0.
$$
For $1$-st Gauduchon metrics we prove the following

\begin{theorem}\label{non-t-circle-bundle}
Let $(N, \varphi, \xi, \eta, g)$ be a $(2n-1)$-dimensional
quasi-Sasakian manifold, $n>2$, and let $F$
be a closed $2$-form on
$N$ which represents an integral cohomology class. Consider
the circle bundle $S^1 \hookrightarrow P \stackrel{\pi}{\longrightarrow} N$ with
connection $1$-form $\theta$ whose curvature form is $d\theta =
\pi^*(F)$. If $d\theta$ is $J$-invariant
then the almost Hermitian structure $(J,h)$ on $P$, defined by
\eqref{acxS1} and \eqref{acxS2-1}, is $1$-st Gauduchon if and only if
\begin{equation}\label{bundle-1-Gauduchon}
(d \eta \wedge d\eta + F\wedge F) \wedge \Phi^{n-3}=0,
\end{equation}
where $\Phi$ denotes the fundamental form of the quasi-Sasakian structure $(\varphi, \xi, \eta, g)$.
\end{theorem}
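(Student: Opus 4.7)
The plan is to compute $\partial\bar\partial\Omega \wedge \Omega^{n-2}$ explicitly, using that $\Omega$, $d\Omega$, and ultimately $dd^{c}\Omega$ all decompose nicely with respect to the splitting of $TP$ into a horizontal piece (lifted from $N$) and the vertical $S^{1}$ direction. A direct calculation from \eqref{acxS1} and \eqref{acxS2-1} shows that the fundamental 2-form on $P$ is
\begin{equation*}
\Omega \;=\; \pi^{*}\Phi \;+\; \theta \wedge \pi^{*}\eta,
\end{equation*}
so, using $d\Phi=0$ and $d\theta=\pi^{*}F$, one immediately obtains $d\Omega = \pi^{*}F\wedge\pi^{*}\eta - \theta\wedge\pi^{*}d\eta$. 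I will drop $\pi^{*}$ in what follows.

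Next I would establish that both $F$ and $d\eta$ are of type $(1,1)$ on $P$. For $F$ this is exactly the hypothesis that $d\theta$ is $J$-invariant (equivalently, the two compatibility conditions $F(\varphi X,Y)+F(X,\varphi Y)=0$ and $\iota_{\xi}F=0$ imposed when defining $J$). For $d\eta$, I would use that the structure on $N$ is quasi-Sasakian: normality yields $\eta([\varphi X,\varphi Y])=-2d\eta(X,Y)$ and hence $d\eta(\varphi X,\varphi Y)=d\eta(X,Y)$ for all $X,Y$, while $\iota_{\xi}\eta=1$ and $\mathcal{L}_{\xi}\eta=0$ force $\iota_{\xi}d\eta=0$; together, these imply $J^{*}(d\eta)=d\eta$. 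With these two facts, together with $J^{*}\eta=\theta$, $J^{*}\theta=-\eta$ and $J^{*}\Omega=\Omega$, I can apply $J$ termwise to $d\Omega$ and obtain
\begin{equation*}
dd^{c}\Omega \;=\; c\bigl(F\wedge F + d\eta\wedge d\eta\bigr)
\end{equation*}
for a nonzero real constant $c$ (the sign depending on the $d^{c}$-convention), using $d\theta=F$, $dF=0$ and $d^{2}\eta=0$.

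Then I would wedge with $\Omega^{n-2}$. Since $(\theta\wedge\eta)^{2}=0$, the binomial expansion collapses to
\begin{equation*}
\Omega^{n-2} \;=\; \Phi^{n-2} \;+\; (n-2)\,\Phi^{n-3}\wedge\theta\wedge\eta .
\end{equation*}
The piece without $\theta$ contributes $(F\wedge F + d\eta\wedge d\eta)\wedge\Phi^{n-2}$, which is the pullback of a form of degree $2n$ from the $(2n-1)$-dimensional manifold $N$, so it vanishes for dimensional reasons. Hence
\begin{equation*}
dd^{c}\Omega\wedge\Omega^{n-2} \;=\; c\,(n-2)\,\bigl(F\wedge F + d\eta\wedge d\eta\bigr)\wedge\Phi^{n-3}\wedge\theta\wedge\eta .
\end{equation*}

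Finally, I would translate vanishing on $P$ into a condition on $N$. Since $\theta$ is the only factor not pulled back, the displayed $2n$-form on $P$ vanishes if and only if $\alpha:=(F\wedge F + d\eta\wedge d\eta)\wedge\Phi^{n-3}\wedge\eta=0$ on $N$. The mild point to check is that this last condition is equivalent to \eqref{bundle-1-Gauduchon}; this is exactly where I expect a tiny subtlety, but it is resolved by the following $\iota_{\xi}$-trick: the forms $F$, $d\eta$ and $\Phi$ each satisfy $\iota_{\xi}(\cdot)=0$, so contracting $\alpha$ with $\xi$ gives
\begin{equation*}
\iota_{\xi}\alpha \;=\; (F\wedge F + d\eta\wedge d\eta)\wedge\Phi^{n-3},
\end{equation*}
showing $\alpha=0$ on $N$ if and only if $(d\eta\wedge d\eta + F\wedge F)\wedge\Phi^{n-3}=0$ on $N$, which completes the proof.
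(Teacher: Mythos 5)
Your proposal is correct and follows essentially the same route as the paper: write $\Omega=\pi^*\Phi\pm\pi^*\eta\wedge\theta$, use normality and the $J$-invariance of $d\theta$ to get $dd^c\Omega=\pm(\pi^*(d\eta\wedge d\eta)+\pi^*(F\wedge F))$, expand $\Omega^{n-2}$ (which collapses since $(\eta\wedge\theta)^2=0$), and kill the $\Phi^{n-2}$ term by the dimension of $N$. Your closing $\iota_\xi$-argument just makes explicit an equivalence the paper leaves implicit, and the sign discrepancies with the paper are pure convention and do not affect the vanishing condition.
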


\begin{proof}
Since $\Omega=\pi^*\Phi + \pi^*\eta\wedge \theta$ is the fundamental $2$-form associated to $(J, h)$ and the structure is quasi-Sasakian we have
$$
Jd\Omega= J(\pi^*(d\eta))\wedge J\theta - J(\pi^*(\eta))\wedge J(d\theta).
$$
Since $J\theta= \pi^*(\eta)$ and   the  almost contact structure is normal we have
$$J(\pi^*(d\eta))=\pi^*(d\eta), \quad d\eta (\xi, X ) =0, \, \forall X \in \chi(N)
$$
and therefore
$$
\begin{array}{lcl}
dJd\Omega \zzz & = &\zzz d(\pi^*(d\eta)\wedge \pi^*(\eta) + \theta\wedge J(\pi^*(F)))\\[6pt]
\zzz & = &\zzz  \pi^*(d\eta)\wedge \pi^*(d\eta) + d\theta\wedge J(\pi^*(F)) - \theta\wedge d[J(\pi^*(F))]\\[6pt]
\zzz & = &\zzz  \pi^*(d\eta)\wedge \pi^*(d\eta) + \pi^*(F)\wedge \pi^*(F),
\end{array}
$$
where in the last equality we used that $d\theta=\pi^*(F)$ is invariant by the complex structure $J$. Therefore,
$$
\begin{array}{lcl}
dJd\Omega\wedge \Omega^{n-2} \zzz & = &\zzz
\pi^*(d\eta\wedge d\eta + F\wedge F )\wedge (\pi^*\Phi + \pi^*\eta\wedge \theta)^{n-2}\\[6pt]
\zzz & = &\zzz \pi^*(d\eta\wedge d\eta + F\wedge F)\wedge \left[ \pi^*\Phi^{n-2} + (n-2) \pi^*(\Phi^{n-3} \wedge \eta)\wedge \theta \right] \\[6pt]
\zzz & = &\zzz (n -2) \pi^* \left  [ (d\eta\wedge d\eta + F\wedge F) \wedge  \Phi^{n-3} \wedge \eta \right ] \wedge \theta
\end{array}
$$
and $dJd\Omega\wedge \Omega^{n-2}$ vanishes if and only if the equation \eqref{bundle-1-Gauduchon} is satisfied.
\end{proof}

\begin{example} {\rm Consider the $5$-dimensional    solvable Lie group $S$  with structure equations
$$
\left \{ \begin{array}{l} d e^i =0,  \,  i = 1, 4,\\[4 pt] d e^2 = e^{13},\\ d e^3 = - e^{12},\\ d e^5 = e^{14} + e^{23}, \end{array} \right.
$$
endowed with the left-invariant Sasakian structure $(\varphi, \xi, \eta, g)$ given by
$$
\varphi (e_1) =  e_4, \quad \varphi (e_2) = - e_3, \quad \eta = e^5, \quad g = \sum_{i = 1}^5 (e^i)^2.
$$
By \cite[Corollary 4.2]{AFV} the solvable Lie group $S$ admits a compact quotient by a uniform discrete subgroup $\Gamma$. If we consider   the $S^1$-bundle $(P, J)$ over  the compact quotient $\Gamma \backslash S$  with connection $1$-form $\theta$ such that $$d \theta =   \pi^* ( 2 e^{14} - 2 e^{23})$$ we have that $\Omega =  \phi^* \Phi + \pi^* \eta \wedge \theta$ is a $J$-Hermitian structure on the $S^1$-bundle over  $\Gamma \backslash S$  such that $\gamma_1 (\Omega) <0$. Therefore,   by Proposition  \ref{curve}  the complex manifold  $(P, J)$ admits a $1$-st Gauduchon metric.}
\end{example}

\begin{remark}
{\rm As a consequence of Theorem \ref{non-t-circle-bundle} we have that on a trivial $S^1$-bundle
over a Sasakian manifold the metric $h$  compatible with the complex structure
$J$ given by \eqref{acxS1} cannot be $1$-st Gauduchon.}
\end{remark}

We show next  that in real dimension 8 there are non-SKT complex nilmanifolds having
invariant $1$-st Gauduchon metrics. The complex equations
\begin{equation}\label{A-family}
d\omega^1=d\omega^2=d\omega^3=0, \quad d\omega^4= A\, \omega^{1\bar{1}} - \omega^{2\bar{2}} - \omega^{3\bar{3}}
\end{equation}
define a complex 8-dimensional nilmanifold $(M_A,J_A)$ for any $A=p+i\, q \in \mathbb{C}$. It is easy to see that $M_A$
is a quotient of $H_3\times H_5$ if $q \not=0$,
a quotient of $H_7\times \mathbb{R}$ if $q=0$, and a quotient of $H_5\times \mathbb{R}^3$ if $p=q=0$.  Here we denote by $H_{2n+1}$
the generalized real Heisenberg group of dimension $2n + 1$.

\begin{lemma}\label{equiv}
Let $M$ be a nilmanifold of real dimension $2n$ endowed with an invariant complex structure $J$, and let
$\Omega$ be any invariant $J$-Hermitian structure. For $k=1,\ldots,\left[\frac{n}{2}\right]-1$, the structure
$\Omega$ is $k$-th Gauduchon if and only if it is $(n-k-1)$-th Gauduchon.
\end{lemma}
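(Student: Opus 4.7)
The plan is to expand both $\partial\db\,\Omega^k\wedge\Omega^{n-k-1}$ and $\partial\db\,\Omega^{n-k-1}\wedge\Omega^k$ in terms of two universal $(n,n)$-forms, and then use a single identity available on nilmanifolds to collapse the two expressions to the same thing.

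First I would apply the Leibniz rule to obtain the general formula
\[
\partial\db\,\Omega^\ell \,=\, \ell(\ell-1)\,\Omega^{\ell-2}\wedge\partial\Omega\wedge\db\Omega \,+\, \ell\,\Omega^{\ell-1}\wedge\partial\db\Omega,
\]
valid for $1\leq \ell\leq n-1$ (with the first term interpreted as zero when $\ell=1$). Setting $\ell=k$ and wedging with $\Omega^{n-k-1}$, and setting $\ell=n-k-1$ and wedging with $\Omega^k$, both $(n,n)$-forms become linear combinations of the common pair
\[
A:=\Omega^{n-3}\wedge\partial\Omega\wedge\db\Omega,\qquad B:=\Omega^{n-2}\wedge\partial\db\Omega,
\]
with coefficients $(k(k-1),\,k)$ and $((n-k-1)(n-k-2),\,n-k-1)$ respectively.

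Next I would establish the key nilmanifold input: for any invariant Hermitian structure $\Omega$ on a nilmanifold $M=\Gamma\backslash G$ of real dimension $2n$, one has $\partial\db\,\Omega^{n-1}=0$ identically. The point is that $\partial\db\,\Omega^{n-1}$ is an invariant top-degree form, hence equal to $c\,\Omega^n$ for some constant $c$; since $2i\,\partial\db=dd^c$, Stokes' theorem on the compact manifold $M$ forces $\int_M \partial\db\,\Omega^{n-1}=0$, so $c=0$. Applying the general formula above with $\ell=n-1$ then yields the universal relation $B=-(n-2)A$.

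Substituting this relation into the two earlier expansions, both products collapse to the same multiple of $A$:
\[
\partial\db\,\Omega^k\wedge\Omega^{n-k-1} \,=\, -k(n-k-1)\,A \,=\, \partial\db\,\Omega^{n-k-1}\wedge\Omega^k.
\]
Since $1\leq k\leq \left[\frac{n}{2}\right]-1$ ensures $k(n-k-1)\neq 0$, each condition is equivalent to $A=0$, whence they are equivalent to each other. The only nontrivial ingredient is the pointwise vanishing of $\partial\db\,\Omega^{n-1}$, which I expect to be the main obstacle; it rests on the interplay between invariance of $\Omega$ (forcing the form to be a constant multiple of $\Omega^n$) and compactness of the nilmanifold (making an exact invariant top form vanish after integration). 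Everything else is straightforward bookkeeping with the Leibniz rule.
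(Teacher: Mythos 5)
Your proof is correct, but it runs along a genuinely different track from the paper's. The paper first proves the integral identity $\int_M \partial\overline\partial\,\Omega^k\wedge\Omega^{n-k-1}=\int_M \partial\overline\partial\,\Omega^{n-k-1}\wedge\Omega^k$ (valid on any compact complex manifold, by Stokes), and then invokes invariance to write each integrand as a constant multiple $\lambda\,\Omega^n$, resp.\ $\mu\,\Omega^n$, so that integrating forces $\lambda=\mu$. You instead work pointwise: the Leibniz expansion correctly gives $\partial\overline\partial\,\Omega^\ell=\ell(\ell-1)\,\Omega^{\ell-2}\wedge\partial\Omega\wedge\overline\partial\Omega+\ell\,\Omega^{\ell-1}\wedge\partial\overline\partial\Omega$ (the sign is right since $\Omega^{\ell-1}$ has even degree), and your key input $\partial\overline\partial\,\Omega^{n-1}=0$ is exactly the statement that every invariant Hermitian metric on a nilmanifold is Gauduchon; your proof of it (invariant top form $=c\,\Omega^n$, exactness, Stokes) is the same invariance-plus-Stokes mechanism the paper uses, just deployed at $\ell=n-1$ rather than at $\ell=k$ and $\ell=n-k-1$. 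The arithmetic checks out: both products collapse to $-k(n-k-1)\,\Omega^{n-3}\wedge\partial\Omega\wedge\overline\partial\Omega$. What your route buys is a stronger, pointwise identity $\partial\overline\partial\,\Omega^k\wedge\Omega^{n-k-1}=\partial\overline\partial\,\Omega^{n-k-1}\wedge\Omega^k$ that in fact holds for \emph{any} Gauduchon metric on \emph{any} complex manifold, invariant or not; what the paper's route buys is the reusable integral formula \eqref{formulakandn-k-1} on arbitrary compact complex manifolds, with invariance entering only at the very last step. Either argument fully establishes the lemma.
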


\begin{proof}
For any $k=1,\ldots,\left[\frac{n}{2}\right]-1$, we use the formula
\begin{equation} \label{formulakandn-k-1}
\int_M  \partial \overline\partial \Omega^k \wedge \Omega^{n-k-1} =
\int_M \partial \overline\partial \Omega^{n-k-1} \wedge \Omega^k,
\end{equation}
which holds for a general compact  complex  manifold. Indeed,  $$\partial \overline\partial \Omega^k \wedge \Omega^{n-k-1}
=
 d (\overline\partial \Omega^k \wedge \Omega^{n-k-1})
 + \overline\partial \Omega^k \wedge \partial \Omega^{n-k-1}$$
and by Stokes theorem
\begin{equation}\label{formula1}
\int_M \partial \overline\partial \Omega^k \wedge \Omega^{n-k-1}
= \int_M  \overline\partial \Omega^k \wedge \partial \Omega^{n-k-1}.
\end{equation}
On the other hand, in a similar way  we get
\begin{equation} \label{formula2}
\int_M  \partial \overline\partial \Omega^{n-k-1} \wedge \Omega^k
= \int_M \overline\partial \Omega^k \wedge \partial \Omega^{n-k-1}.
\end{equation}
Comparing  the two formulas \eqref{formula1} and \eqref{formula2} we obtain \eqref{formulakandn-k-1}.

Since $\Omega$ is invariant on the complex nilmaniold $(M, J)$,
there exist $\lambda, \mu\in \mathbb{R}$ such that $\frac{i}{2}\partial\overline\partial \Omega^k \wedge \Omega^{n-k-1}=\lambda\, \Omega^n$ and
$\frac{i}{2}\partial\overline\partial \Omega^{n-k-1} \wedge \Omega^k=\mu\, \Omega^n$.
Therefore,
$$
\lambda \int_M \Omega^n= \frac{i}{2} \int_M \partial \overline\partial \Omega^k \wedge \Omega^{n-k-1}=
\frac{i}{2} \int_M \partial \overline\partial \Omega^{n-k-1} \wedge \Omega^k= \mu \int_M \Omega^n,
$$
which implies that $\lambda=0$ if and only if $\mu=0$, i.e. $\Omega$ is $k$-th Gauduchon if and only if it is $(n-k-1)$-th Gauduchon.
\end{proof}

\begin{proposition}\label{8dim-nilm-family}
Let $(M_A,J_A)$ be the complex nilmanifold given by \eqref{A-family} and let
$$
\Omega= \sum_{j,k=1}^4 x_{j\bar{k}}\, \omega^{j\bar{k}},
$$
where $x_{j\bar{k}} \in \mathbb{C}$ and $\overline{x_{k\bar{j}}}=-x_{j\bar{k}}$, be the
fundamental 2-form of any invariant
$J_A$-Hermitian metric $g$ on $M_A$.
Then, $\Omega$ is never SKT, and $\Omega$ is $1$-st Gauduchon if and only if it is $2$-nd Gauduchon if and only if it is astheno-K\"ahler.

Moreover,
\begin{enumerate}
\item[(i)] $\Omega$ is $1$-st Gauduchon if and only if
$$
p\, \det \left(\!\!\! \begin{array}{cc}
x_{2\bar{2}} & x_{2\bar{4}} \\
-\overline{x_{2\bar{4}}} & x_{4\bar{4}}
\end{array} \!\right)\!
+ p\, \det \left(\!\!\! \begin{array}{cc}
x_{3\bar{3}} & x_{3\bar{4}} \\
-\overline{x_{3\bar{4}}} & x_{4\bar{4}}
\end{array} \!\right)\!
= \det \left(\!\!\! \begin{array}{cc}
x_{1\bar{1}} & x_{1\bar{4}} \\
-\overline{x_{1\bar{4}}} & x_{4\bar{4}}
\end{array} \!\right)\! ;
$$
\item[(ii)] $\Omega$ is balanced if and only if $q=0$ and
$$
p\, \det \left(\!\!\! \begin{array}{ccc}
x_{2\bar{2}} & x_{2\bar{3}} & x_{2\bar{4}} \\
-\overline{x_{2\bar{3}}} & x_{3\bar{3}} & x_{3\bar{4}} \\
-\overline{x_{2\bar{4}}} & -\overline{x_{3\bar{4}}} & x_{4\bar{4}}
\end{array} \!\right)\!
=
\det \left(\!\!\! \begin{array}{ccc}
x_{1\bar{1}} & x_{1\bar{2}} & x_{1\bar{4}} \\
-\overline{x_{1\bar{2}}} & x_{2\bar{2}} & x_{2\bar{4}} \\
-\overline{x_{1\bar{4}}} & -\overline{x_{2\bar{4}}} & x_{4\bar{4}}
\end{array} \!\right)\!
+
\det \left(\!\!\! \begin{array}{ccc}
x_{1\bar{1}} & x_{1\bar{3}} & x_{1\bar{4}} \\
-\overline{x_{1\bar{3}}} & x_{3\bar{3}} & x_{3\bar{4}} \\
-\overline{x_{1\bar{4}}} & -\overline{x_{3\bar{4}}} & x_{4\bar{4}}
\end{array} \!\right)\! .
$$
\end{enumerate}

In particular, if $p<0$ then the complex manifold $(M_A,J_A)$ does not admit either invariant balanced metrics or $1$-st Gauduchon metrics.
\end{proposition}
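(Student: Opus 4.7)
The plan is to work in the left-invariant $(1,0)$-basis $\omega^1,\dots,\omega^4$, exploiting that by \eqref{A-family} only $\omega^4$ (and its conjugate) fails to be closed, with $d\omega^4 = \bar\partial\omega^4 = A\,\omega^{1\bar 1}-\omega^{2\bar 2}-\omega^{3\bar 3}$ pure of type $(1,1)$. In the decomposition
\[
\Omega = \Omega_0 + \sum_{j=1}^3 \bigl(x_{j\bar 4}\omega^{j\bar 4} + x_{4\bar j}\omega^{4\bar j}\bigr) + x_{4\bar 4}\omega^{4\bar 4},
\]
the piece $\Omega_0=\sum_{j,k\leq 3}x_{j\bar k}\omega^{j\bar k}$ is closed, and for $j\in\{1,2,3\}$ one has $\bar\partial\omega^{j\bar 4}=0=\partial\omega^{4\bar j}$; hence only $x_{4\bar 4}\omega^{4\bar 4}$ contributes to $\partial\bar\partial\Omega$, and a direct calculation gives (writing $A=p+iq$)
\[
\partial\bar\partial\Omega = 2\,x_{4\bar 4}\bigl(p\,\omega^{1\bar 1 2\bar 2}+p\,\omega^{1\bar 1 3\bar 3}-\omega^{2\bar 2 3\bar 3}\bigr).
\]
The coefficient $-2x_{4\bar 4}$ of $\omega^{2\bar 2 3\bar 3}$ never vanishes, so $\Omega$ is never SKT.

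For (i), I would wedge this with $\Omega^2$ and use the identity
\[
\omega^{a\bar a b\bar b}\wedge\Omega^2 = 2\,\det\!\begin{pmatrix} x_{c\bar c} & x_{c\bar d}\\ -\overline{x_{c\bar d}} & x_{d\bar d}\end{pmatrix}\,\omega^{1\bar 1 2\bar 2 3\bar 3 4\bar 4}, \qquad \{c,d\}=\{1,2,3,4\}\setminus\{a,b\},
\]
which picks out the $2\times 2$ principal minor of $X=(x_{j\bar k})$ complementary to $\{a,b\}$; equation (i) follows by inspection. The equivalence with $2$-nd Gauduchon is Lemma~\ref{equiv} in the case $n=4$, $k=1$. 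For the astheno-K\"ahler equivalence I would expand $\partial\bar\partial\Omega^2 = 2\,\partial\bar\partial\Omega\wedge\Omega - 2\,\bar\partial\Omega\wedge\partial\Omega$. Because the $(1,0)$-legs of $\bar\partial\Omega$ lie in $\{1,2,3\}$ and the $(0,1)$-legs of $\partial\Omega$ lie in $\{\bar 1,\bar 2,\bar 3\}$, a leg-by-leg analysis shows that all $(3,3)$-components of $\partial\bar\partial\Omega^2$ whose support includes the index $4$ or $\bar 4$ appear identically in both summands and therefore cancel, so $\partial\bar\partial\Omega^2$ is proportional to $\omega^{123\bar 1\bar 2\bar 3}$ with proportionality constant a nonzero multiple of the difference of the two sides of~(i). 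Since $\omega^{123\bar 1\bar 2\bar 3}\wedge\Omega = \pm x_{4\bar 4}\,\omega^{1\bar 1 2\bar 2 3\bar 3 4\bar 4}$, the conditions $\partial\bar\partial\Omega^2=0$ and $\partial\bar\partial\Omega^2\wedge\Omega=0$ then coincide. This cancellation, which requires careful sign bookkeeping over permutations of basis elements, is the technically most delicate step.

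For (ii), since $d\Omega^3=3\,d\Omega\wedge\Omega^2$ and $\Omega$ is real, the balanced condition reduces to $\partial\Omega\wedge\Omega^2=0$, a system of four scalar equations indexed by the basis $(4,3)$-forms. Three of these (obtained by omitting $\bar j$ with $j\in\{1,2,3\}$) collapse to the single $3\times 3$-minor identity of~(ii), while the fourth (omitting $\bar 4$) isolates the factor $A-\bar A=2iq$ and forces $q=0$. Finally, writing $h_{j\bar k}=-i\,x_{j\bar k}$ so that $H=(h_{j\bar k})$ is the underlying positive definite Hermitian matrix, one has $\det(x_{ab})_{a,b\in S}=i^{|S|}\det H_S$, so the $2\times 2$ minors in~(i) are negative real numbers and the $3\times 3$ minors in~(ii) are negative imaginary. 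For $p<0$ the left-hand side of~(i) is positive while the right-hand side is negative, and~(ii) (after imposing $q=0$ and dividing by $-i$) becomes $p\,\alpha_1=\alpha_2+\alpha_3$ with $\alpha_i>0$, both of which are unsolvable; the extension to arbitrary (not necessarily invariant) balanced metrics follows by symmetrization~\cite{FG}.
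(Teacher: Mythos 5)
Your proposal is correct and follows essentially the same route as the paper: a direct computation of $\partial\bar\partial\Omega$ in the invariant coframe (only the $\omega^{4\bar 4}$ term contributes), the non-vanishing coefficient of $\omega^{2\bar 2 3\bar 3}$ for the SKT claim, the complementary-minor identity for (i), Lemma~\ref{equiv} for the $1$-st/$2$-nd Gauduchon equivalence, the reduction of $d\Omega^3=0$ to $\partial\Omega\wedge\Omega^2=0$ for (ii), and the positivity of the principal minors for the final assertion. Your cancellation argument showing $\partial\bar\partial\Omega^2\propto\omega^{123\bar 1\bar 2\bar 3}$ is a legitimate way to carry out what the paper dismisses as ``easy to check'' (indeed $\bar\partial\Omega\wedge\partial\Omega=\tfrac{1}{x_{4\bar 4}}\,\partial\bar\partial\Omega\wedge\sum_{j,k}x_{j\bar 4}x_{4\bar k}\,\omega^{j\bar k}$ makes the cancellation of all terms carrying an index $4$ or $\bar 4$ immediate), so the two proofs agree in substance.
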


\begin{proof} From the complex equations \eqref{A-family} if follows that $\Omega$ satisfies
$$
\partial \db \Omega =  x_{4\bar{4}} (A+\bar{A}) \omega^{1\bar{1}2\bar{2}} + x_{4\bar{4}} (A+\bar{A}) \omega^{1\bar{1}3\bar{3}}
- 2 x_{4\bar{4}} \omega^{2\bar{2}3\bar{3}},
$$
which never vanishes because $x_{4\bar{4}}\not=0$. Therefore, any invariant $J_A$-Hermitian metric is not SKT.

Moreover, a direct calculation shows that
$$
\begin{array}{lcl}
\partial \db \Omega \wedge \Omega^2 &=&2 x_{4\bar{4}}\left[ (A+\bar{A}) (x_{2\bar{2}}x_{4\bar{4}}+x_{3\bar{3}}x_{4\bar{4}}+|x_{2\bar{4}}|^2 +|x_{3\bar{4}}|^2) \right .\\[5 pt]
& & \left . -2 (x_{1\bar{1}}x_{4\bar{4}}+|x_{1\bar{4}}|^2)
\right] \omega^{1\bar{1}2\bar{2}3\bar{3}4\bar{4}}
\end{array}
$$
and therefore the metric is $1$-st Gauduchon if and only if
\begin{equation}\label{gaud-dim8}
(A+\bar{A}) (x_{2\bar{2}}x_{4\bar{4}}+x_{3\bar{3}}x_{4\bar{4}}+|x_{2\bar{4}}|^2 +|x_{3\bar{4}}|^2) -2 (x_{1\bar{1}}x_{4\bar{4}}+|x_{1\bar{4}}|^2)=0.
\end{equation}
It is easy to check that the condition for astheno-K\"ahler 
is satisfied if and only if \eqref{gaud-dim8} holds.
This condition is precisely the one given in (i), so the proof of (i) follows from Lemma~\ref{equiv}.

The balanced condition for $\Omega$, i.e. $d\Omega^3=0$, can be seen to be equivalent to the system given by the equation
$$
A\, \det \left(\!\!\! \begin{array}{ccc}
x_{2\bar{2}} & x_{2\bar{3}} & x_{2\bar{4}} \\
-\overline{x_{2\bar{3}}} & x_{3\bar{3}} & x_{3\bar{4}} \\
-\overline{x_{2\bar{4}}} & -\overline{x_{3\bar{4}}} & x_{4\bar{4}}
\end{array} \!\right)\!
=
\det \left(\!\!\! \begin{array}{ccc}
x_{1\bar{1}} & x_{1\bar{2}} & x_{1\bar{4}} \\
-\overline{x_{1\bar{2}}} & x_{2\bar{2}} & x_{2\bar{4}} \\
-\overline{x_{1\bar{4}}} & -\overline{x_{2\bar{4}}} & x_{4\bar{4}}
\end{array} \!\right)\!
+
\det \left(\!\!\! \begin{array}{ccc}
x_{1\bar{1}} & x_{1\bar{3}} & x_{1\bar{4}} \\
-\overline{x_{1\bar{3}}} & x_{3\bar{3}} & x_{3\bar{4}} \\
-\overline{x_{1\bar{4}}} & -\overline{x_{3\bar{4}}} & x_{4\bar{4}}
\end{array} \!\right)\!
$$
and its conjugate. The positive definiteness of the metric $g$ implies in particular that
$$-i\, x_{j\bar{j}}, \quad
- \det \left(\!\!\! \begin{array}{cc}
x_{j\bar{j}} & x_{j\bar{k}} \\
-\overline{x_{j\bar{k}}} & x_{k\bar{k}}
\end{array} \!\right),
\quad
i\, \det \left(\!\!\! \begin{array}{ccc}
x_{j\bar{j}} & x_{j\bar{k}} & x_{j\bar{l}} \\
-\overline{x_{j\bar{k}}} & x_{k\bar{k}} & x_{k\bar{l}} \\
-\overline{x_{j\bar{l}}} & -\overline{x_{k\bar{l}}} & x_{l\bar{l}}
\end{array} \!\right)
$$
are strictly positive real numbers for $1\leq j<k<l\leq 4$. Therefore, the balanced condition for $\Omega$ is satisfied
if and only if $A$ is real and (ii) holds.

Finally, the last assertion in the proposition follows easily from the positive definiteness of the metric $g$.
\end{proof}

\begin{remark}
{\rm The family \eqref{A-family} belongs to a more general family of astheno-K\"ahler manifolds given in \cite{FT}.
}
\end{remark}

As a consequence of the above proposition and \cite{FG} we get that the complex manifold $(M_A,J_A)$ does not admit any SKT Hermitian structure, and if  $p<0$ or $q\not=0$ then $(M_A,J_A)$ does not possess any balanced Hermitian metric.

\begin{corollary}\label{h7xR}
Let $M$ be a compact quotient of $H_7\times \mathbb{R}$ by a lattice of maximal rank. Then, there exists a complex structure on $M$ which
does not admit any SKT Hermitian structure, but having balanced Hermitian metrics and $k$-th Gauduchon metrics for $k=1$ and $2$.
\end{corollary}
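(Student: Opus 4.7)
The plan is to realize the corollary as a specific instance of the family $(M_A,J_A)$ analyzed in Proposition~\ref{8dim-nilm-family}. Recall that for $A=p+iq$ the manifold $M_A$ is a quotient of $H_7\times\mathbb{R}$ precisely when $q=0$ and $p\neq 0$ (the subcase $p=q=0$ falls instead into the $H_5\times\mathbb{R}^3$ class). Accordingly, I would fix $A=p$ with $p>0$ real, producing a complex structure $J_p$ on a compact quotient $M$ of $H_7\times\mathbb{R}$, and verify that $J_p$ carries both balanced and $k$-th Gauduchon metrics ($k=1,2$) but no SKT one.

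The non-existence of SKT Hermitian metrics compatible with $J_p$ is immediate from the preceding discussion: Proposition~\ref{8dim-nilm-family} asserts that no invariant $J_p$-Hermitian structure is SKT, since $\partial\overline\partial\Omega$ is nonzero whenever $x_{4\bar4}\neq0$, and the symmetrization result of \cite{FG} promotes this to the statement that $(M,J_p)$ admits no SKT metric at all.

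For the existence of the remaining metrics I would simply try the ansatz of a diagonal Hermitian form $\Omega=\sum_{j=1}^{4} i\mu_j\,\omega^{j\bar j}$ with positive real parameters $\mu_j$. With this ansatz all off-diagonal coefficients $x_{j\bar k}$ vanish, and condition (i) of Proposition~\ref{8dim-nilm-family} collapses to the single linear relation $\mu_1=p(\mu_2+\mu_3)$, which is trivially solvable in positive reals for any $p>0$; the resulting metric is therefore $1$-st Gauduchon, and by the same proposition (or alternatively by Lemma~\ref{equiv}) it is automatically $2$-nd Gauduchon as well. Likewise, since $q=0$, condition (ii) reduces to $p\,\mu_2\mu_3=\mu_1(\mu_2+\mu_3)$, again solvable in positive reals for any $p>0$, which produces a balanced metric compatible with $J_p$ (generally for a different choice of $\mu_j$'s from the Gauduchon one).

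There is no serious obstacle here once Proposition~\ref{8dim-nilm-family} is in hand: the whole corollary reduces to choosing two sets of positive parameters $\mu_j$. The only minor point requiring attention is to keep track of the signs arising from the purely imaginary diagonal entries $x_{j\bar j}=i\mu_j$: each $r\times r$ diagonal minor contributes a factor $i^r$, but these factors appear uniformly on both sides of conditions (i) and (ii) and cancel, leaving the clean positive-real relations displayed above.
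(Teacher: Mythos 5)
Your proposal is correct and follows exactly the route the paper intends: the corollary is stated as an immediate consequence of Proposition~\ref{8dim-nilm-family} (with the SKT non-existence upgraded from invariant to arbitrary metrics via the symmetrization argument of \cite{FG}), and your choice $A=p>0$ real together with the diagonal ansatz $x_{j\bar j}=i\mu_j$ reduces conditions (i) and (ii) to $\mu_1=p(\mu_2+\mu_3)$ and $p\,\mu_2\mu_3=\mu_1(\mu_2+\mu_3)$, each solvable in positive reals. The sign bookkeeping for the $i^r$ factors is handled correctly, so there is nothing to add.
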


Notice that a result similar to the above corollary holds for $H_5\times \mathbb{R}^3$, which corresponds to the case $A=0$.

\begin{corollary}\label{h3xh5}
Let $M$ be a compact quotient of $H_3\times H_5$ by a lattice of maximal rank. Then, there exists a complex structure on $M$ which
does not admit neither SKT nor balanced Hermitian structure, but having invariant $k$-th Gauduchon metrics for $k=1$ and $2$.
\end{corollary}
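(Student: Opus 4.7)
The plan is to apply Proposition~\ref{8dim-nilm-family} to any parameter $A = p + iq \in \mathbb{C}$ with $p > 0$ and $q \neq 0$. By the classification given immediately after~\eqref{A-family}, the condition $q \neq 0$ forces $M_A$ to be a compact quotient of $H_3 \times H_5$, so any such $A$ produces a complex structure $J_A$ on the required manifold. The corollary then reduces to checking three points: (a) no invariant SKT or balanced Hermitian structure exists on $(M_A, J_A)$; (b) invariant $1$-st and $2$-nd Gauduchon metrics do exist; and (c) the non-existence in (a) can be upgraded to rule out any (possibly non-invariant) SKT or balanced metric.

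For (a), Proposition~\ref{8dim-nilm-family} states unconditionally that no invariant $J_A$-Hermitian form is SKT, which settles the SKT half. For the balanced case, part (ii) of that proposition shows that balancedness forces $q = 0$, contradicting our choice. For (c), I would invoke the symmetrization argument of \cite{FG} already used just before Theorem~\ref{existence-6dim-nilm}: on a nilmanifold with invariant complex structure, the existence of a compatible SKT (respectively, balanced) Hermitian metric implies the existence of an invariant one. Combined with (a), this rules out any SKT or balanced metric on $(M_A, J_A)$.

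For (b), I would combine Lemma~\ref{equiv} --- which, with $n = 4$ and $k = 1$, equates the $1$-st and $2$-nd Gauduchon conditions on invariant metrics --- with the explicit condition stated in Proposition~\ref{8dim-nilm-family}(i). Using the diagonal ansatz $x_{j\bar{k}} = 0$ for $j \neq k$ and writing $x_{j\bar{j}} = i\lambda_j$ with $\lambda_j > 0$, the condition in (i) collapses to
\[
p\,(\lambda_2 + \lambda_3) = \lambda_1,
\]
which, since $p > 0$, is satisfied by infinitely many positive tuples $(\lambda_1,\lambda_2,\lambda_3,\lambda_4)$. Any such choice yields an invariant Hermitian metric that is simultaneously $1$-st and $2$-nd Gauduchon (and, as observed in Proposition~\ref{8dim-nilm-family}, even astheno-K\"ahler).

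I do not anticipate a genuinely hard step: the corollary is a direct specialization of Proposition~\ref{8dim-nilm-family}. The only point deserving explicit flagging is the appeal to the symmetrization principle of \cite{FG} to pass from non-invariant to invariant metrics; this is standard and was already invoked in the proof of Theorem~\ref{existence-6dim-nilm}, so the same reference suffices here.
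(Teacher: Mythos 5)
Your proposal is correct and follows essentially the same route as the paper, which derives the corollary directly from Proposition~\ref{8dim-nilm-family} together with the symmetrization principle of \cite{FG} (stated in the sentence preceding the corollaries). Your explicit choice of $p>0$, $q\neq 0$ and the diagonal ansatz reducing condition (i) to $p(\lambda_2+\lambda_3)=\lambda_1$ correctly fills in the existence step that the paper leaves implicit (and is consistent with the paper's observation that $p<0$ obstructs invariant $1$-st Gauduchon metrics).
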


\begin{remark}
{\rm Note that for $p = q = 0$ one has on $M_A$ the complex structure coming from the  product  $H_5 \times \R^3$ of a Sasakian manifold with a cosymplectic one.
If $p  < 0$ (respectively  $p  > 0$)  and $q =0$ then
the complex structure comes from
a product of  a Sasakian (resp. quasi-Sasakian) manifold with $\R$.
Finally,
for $q \neq 0$ the complex structure comes from a product  $H_5 \times H_3$ of  two quasi-Sasakian manifolds.
}
\end{remark}

\section{Blow-ups} \label{blow-ups}

In \cite{FT2} it was shown that the SKT condition is preserved by the blow-up  of the complex manifold at a point  or along a compact submanifold. We can  prove the following

\begin{proposition} Let $(M,J)$ be a  complex manifold of complex dimension $n >2$  and endowed with a  $J$-Hermitian metric $g$ such that  its fundamental $2$-form $\Omega$ satisfies the condition $\partial \overline \partial \Omega \wedge \Omega^{n -2}  > 0$ (or $< 0)$. Then, the blow-up $\tilde M_p$ at a point $p \in M$ and the blow-up $\tilde M_Y$ along a compact submanifold $Y$ admit a Hermitian metric such that
its fundamental $2$-form $\tilde \Omega$ satisfies the 
condition $\partial \overline \partial  \tilde \Omega \wedge \tilde \Omega^{n -2}  > 0$ (or $< 0)$.
\end{proposition}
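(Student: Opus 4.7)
I would follow the blow-up construction used in \cite{FT,FT2} for the SKT and astheno-K\"ahler preservation results, with the observation that the present pointwise strict sign condition on $\partial\overline\partial\Omega\wedge\Omega^{n-2}$ is an \emph{open} condition, so it will persist under sufficiently small and sufficiently localized modifications. Denote by $\pi:\tilde M\to M$ the blow-up (at $p$ or along $Y$) and by $E\subset\tilde M$ the exceptional divisor. On $\tilde M\setminus E$ the map $\pi$ is a biholomorphism, and there
\[
\partial\overline\partial(\pi^*\Omega)\wedge(\pi^*\Omega)^{n-2}=\pi^*\bigl(\partial\overline\partial\Omega\wedge\Omega^{n-2}\bigr)
\]
has the required sign. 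The task is to modify $\pi^*\Omega$ only in an arbitrarily small neighborhood of $E$ so that the resulting form is a Hermitian metric on $\tilde M$ and still satisfies the strict sign condition pointwise.

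\textbf{Construction of $\tilde\Omega$.} Choose an open neighborhood $U$ of $E$ in $\tilde M$ that is biholomorphic to the blow-up of a ball in $\mathbb{C}^n$ (resp.\ of a tubular neighborhood of $Y$); such a $U$ is quasi-projective and in particular carries a K\"ahler form. Via a smooth cut-off supported in $U$, this local K\"ahler form produces a smooth, globally defined, positive $(1,1)$-form $\eta$ on $\tilde M$ supported in $U$, which is strictly positive in every direction on a smaller neighborhood $V\Subset U$ of $E$ (so that it compensates the degeneracy of $\pi^*\Omega$ along $E$). Set
\[
\tilde\Omega_\epsilon:=\pi^*\Omega+\epsilon\,\eta,\qquad 0<\epsilon\ll 1.
\]
A standard argument, as in \cite{FT2}, shows that $\tilde\Omega_\epsilon$ is a Hermitian metric for every sufficiently small $\epsilon>0$.

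\textbf{Sign analysis.} Expanding
\[
\partial\overline\partial\tilde\Omega_\epsilon\wedge\tilde\Omega_\epsilon^{n-2}
=\bigl(\pi^*(\partial\overline\partial\Omega)+\epsilon\,\partial\overline\partial\eta\bigr)\wedge\sum_{k=0}^{n-2}\binom{n-2}{k}\epsilon^k(\pi^*\Omega)^{n-2-k}\wedge\eta^k,
\]
the $\epsilon^0$-term is exactly $\pi^*(\partial\overline\partial\Omega\wedge\Omega^{n-2})$, which has the required sign on $\tilde M\setminus E$. On any compact subset of $\tilde M\setminus E$ this term is bounded below in absolute value, so for $\epsilon$ small the remaining $O(\epsilon)$ terms do not alter the sign there; outside $U$ one in fact has $\tilde\Omega_\epsilon=\pi^*\Omega$ so no perturbation occurs at all.

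\textbf{Main obstacle.} The delicate point lies on $E$ itself. Since $\pi^*\Omega$ has complex rank strictly less than $n$ along $E$, the $\epsilon^0$-term vanishes identically there, and one must ensure that a higher-order-in-$\epsilon$ contribution carries the desired sign. I would address this by choosing $\eta$ near $E$ not as a pure K\"ahler form (for which $\partial\overline\partial\eta=0$ would kill the needed higher-order terms) but as a small non-K\"ahler perturbation of the local K\"ahler model, arranged so that $i\partial\overline\partial\eta\wedge\eta^{n-2}$ has the required sign pointwise on a neighborhood of $E$. Since this last is an open sign condition on Hermitian metrics over $U$, representatives of either sign can be produced by a conformal modification of the K\"ahler model, in the spirit of \cite[Corollary~10]{FWW}. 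With this choice the dominant surviving term on $E$ (of order $\epsilon^{n-1}$) has the correct sign, and a final continuity-and-compactness argument in $\epsilon$ glues the sign on $V$ to the pulled-back sign on $\tilde M\setminus U$, yielding $\partial\overline\partial\tilde\Omega_\epsilon\wedge\tilde\Omega_\epsilon^{n-2}$ of the required sign everywhere on $\tilde M$.
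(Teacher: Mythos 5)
Your overall strategy---perturb $\pi^*\Omega$ by $\epsilon$ times a $(1,1)$-form concentrated near the exceptional divisor $E$, and exploit openness of the strict sign condition---is the same as the paper's. The decisive difference is the choice of correction form, and it is where your argument breaks. The paper takes $\psi$ to be the restriction to $\tilde M_p$ of $i\partial\overline\partial\bigl((p_1^*h)\,p_2^*\log\Vert\cdot\Vert^2\bigr)$ (and, for $\tilde M_Y$, the Chern curvature $\hat\omega$ of a Hermitian line bundle), so the correction form is closed and in particular $\partial\overline\partial$-closed; hence $\partial\overline\partial\tilde\Omega=\pi^*(\partial\overline\partial\Omega)$ exactly, and the whole analysis reduces to the single expression $\pi^*(\partial\overline\partial\Omega)\wedge(\pi^*\Omega+\epsilon\psi)^{n-2}$. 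You instead cut off the local K\"ahler form itself (so $\partial\overline\partial\eta\neq0$ where the cut-off varies) and then deliberately make $\eta$ non-$\partial\overline\partial$-closed near $E$ in order to force strict positivity on $E$.

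Concretely, your claim that the dominant surviving term on $E$ is the $\epsilon^{n-1}$ term $\epsilon^{n-1}\,\partial\overline\partial\eta\wedge\eta^{n-2}$ is false. The expansion also contains $\binom{n-2}{k}\epsilon^{k}\,\pi^*(\partial\overline\partial\Omega)\wedge(\pi^*\Omega)^{n-2-k}\wedge\eta^{k}$ and $\binom{n-2}{k}\epsilon^{k+1}\,\partial\overline\partial\eta\wedge(\pi^*\Omega)^{n-2-k}\wedge\eta^{k}$ for $1\le k\le n-2$: for a blow-up along a positive-dimensional $Y$ the form $\pi^*(\partial\overline\partial\Omega)$ does not vanish on $E$ (there $d\pi$ has rank $\ge 4$), for the point blow-up the term $\partial\overline\partial\eta\wedge\pi^*\Omega\wedge\eta^{n-3}$ of order $\epsilon^{n-2}$ survives on $E$ (already of order $\epsilon$ when $n=3$), and none of these has a controlled sign; they all dominate $\epsilon^{n-1}$ as $\epsilon\to0$. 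Moreover, on a shrinking annulus around $E$ your $\epsilon^{0}$ term tends to $0$ (the pullback of a volume form degenerates along $E$), so ``bounded below on compact subsets of $\tilde M\setminus E$'' does not yield a single $\epsilon$ working everywhere; the cancellation that rescues the argument is precisely $\partial\overline\partial\psi=0$. Note finally that even with the paper's construction the resulting $(n,n)$-form vanishes identically on $E$ for the point blow-up ($\pi^*$ kills $4$-forms there), so the strict inequality can only be expected off the exceptional divisor; that weaker conclusion already suffices to determine the sign of $\gamma_1$ via \cite[Corollary 10]{FWW}, so the problem you set out to fix with a non-closed perturbation is one that does not need to be, and as set up cannot be, fixed this way.
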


\begin{proof}
Let us start the proof in the case of the blow-up of $M$ at a point $p$.
We recall now  briefly the construction of the blow-up.
Let $z = (z_1, \ldots ,z_n)$ be holomorphic coordinates in an open
set $U$ centered around the point $p \in M$. The
blow-up $\tilde M_p$ of $M$ is the complex manifold obtained by
adjoining to $M \setminus \{ p \}$ the manifold
$$
\tilde U = \{ (z, l) \in U \times \CP^{n - 1} \, \vert \, z \in l
\}
$$
by using the isomorphism
$$
\tilde U \setminus \{ z =0 \} \cong U \setminus \{ p \}
$$
given by the projection $ (z, l) \to z$. In this way there is a
natural projection $\pi: \tilde M_p \to M$ extending the identity
on $M \setminus \{ p \}$ and the exceptional divisor $\pi^{-1}(p)$
of the blow-up is naturally isomorphic to the complex projective
space $\CP^{n - 1}$.

The $2$-form $\pi^* \Omega$ is  a $(1,1)$ form on $\tilde M_p$  since $\pi$ is holomorphic, but it is
not positive definite on $\pi^{-1} (M \setminus \{ p \})$. As in
the K\"ahler case, let $h$ be a ${\mathcal C}^{\infty}$-function
having support in $U$, i.e. $0 \leq h \leq 1$ and $h=1$ in a
neighborhood of $p$. On $U \times (\C^n \setminus \{ 0 \})$
consider the $2$-form
$$
\gamma = i \partial \overline \partial \left( (p_1^* h) p_2^* \log
\vert \vert \cdot\vert \vert^2 \right)\,,
$$
where $p_1$ and $p_2$ denote the two projections of $U \times
(\C^n \setminus \{ 0 \})$ on $U$ and $\C^n\setminus\{0\}$,
respectively.
\newline
Let $\psi$ be the restriction of $\gamma$ to
$\tilde M_p$. Then there exists a small enough real number
$\epsilon$ such that the $2$-form $\tilde \Omega = \epsilon \psi +
\pi^* \Omega$ is positive definite.

Then
$$
\partial \overline \partial  \tilde \Omega \wedge \tilde \Omega^{n -2} =  \partial \overline \partial  \pi^* \Omega \wedge (\pi^* \Omega + \epsilon \psi)^{n -2}.
$$
Since
$\partial \overline \partial  \pi^* \Omega \wedge  \pi^* \Omega^{n -2} >0$ (or $< 0$)  by assumption we have that we can choose $\epsilon$
small enough
in order to  have $\partial \overline \partial  \tilde \Omega \wedge \tilde \Omega^{n -2} >0$
if  $\partial \overline \partial  \Omega \wedge \Omega^{n -2} >0$
(or $< 0$  if  $\partial \overline \partial  \Omega \wedge \Omega^{n -2}  <0$).

For the case of  the blow-up  of  $(M, J)$  along a compact submanifold  one proceeds  in a similar way.
Let $Y\subset M$ be a compact complex submanifold of $M$. Then consider
the blow-up $\tilde{M}_Y$ of $M$ along $Y$. If we denote by  $\pi :\tilde{M}_Y\to M$ the holomorphic projection, by
construction $\pi:\tilde{M}\setminus\pi^{-1}(Y)\to M\setminus Y$
is a biholomorphism and $\pi^{-1}(Y)\cong\Proj(\mathcal{N}_{Y\vert
M})$, where $\Proj(\mathcal{N}_{Y\vert M})$ is the projectified of
the normal bundle of $Y$.  As in \cite{FT2}  we can show that there exists a holomorphic line
bundle $L$ on $\tilde{M}_Y$ such that $L$ is trivial on
$\tilde{M}_Y\setminus\pi^{-1}(Y)$ and such that its restriction to
$\pi^{-1}(Y)$ is isomorphic to
$\mathcal{O}_{\Proj(\mathcal{N}_{Y\vert M})}(1)$. \newline Let $h$
be a Hermitian structure on
$\mathcal{O}_{\Proj(\mathcal{N}_{Y\vert M})}(1)$ and $\omega$ be
the corresponding Chern form. As in \cite{FT2} one can show that the metric $h$ can
be extended to a metric structure $\hat{h}$ on $L$,
in such a way
that $\hat{h}$ is the flat metric structure on the complement of a
compact neighborhood $W$ of $Y$ induced by the trivialization of
$L$ on $\tilde{M}_Y\setminus\pi^{-1}(Y)$. Therefore, the Chern
curvature $\hat{\omega}$ of $L$ vanishes on $M\setminus W$ and
$\hat{\omega}\vert_{\Proj(\mathcal{N}_{Y\vert M})}=\omega$.
\newline Hence, since $Y$ is compact, there exists $\epsilon
\in\R,\,\epsilon >0$, small enough such that
$$
\tilde{\Omega} =\pi^* \Omega +\epsilon\,\hat{\omega}
$$
is positive definite.  As for the previous case of
blow-up at a point we can  choose $\epsilon$ small enough in order
to have $\partial \overline \partial  \tilde \Omega \wedge \tilde \Omega^{n -2} >0$
if  $\partial \overline \partial  \Omega \wedge \Omega^{n -2} >0$
(or $< 0$  if  $\partial \overline \partial  \Omega \wedge \Omega^{n -2}  <0$).
\end{proof}

If one applies the previous proposition to a compact complex manifold of complex dimension $3$
endowed with a Hermitian structure $(J, g, \Omega)$ such that $\gamma_1 (\Omega) <0$,
then by using Proposition~\ref{curve} and Corollary 10 in \cite{FWW}, there exists a $1$-st Gauduchon
metric on the complex blow-up at a point or along a compact submanifold.

\section{Twists}

In this section  we will show that by applying the twist construction of  \cite[Proposition 4.5]{Sw}
one can get new  simply-connected $6$-dimensional  complex manifolds which admit $1$-st Gauduchon (non SKT) metrics.

We recall that in general, given a manifold $M$ with
a $T_M$-torus action and a principal $T_P$-torus bundle $P$ with connection $\theta$, if the torus
action of $T_M$  lifts to $P$ commuting with the principal action of $T_P$, then one may
construct the twist $W$ of the manifold, as the quotient of $P/T_M$ by the torus action
(see \cite{Sw}). Moreover, if the lifted torus action preserves the principal connection
$\theta$, then tensors on $M$ can be transferred to tensors on $W$ if their pullbacks to
$P$ coincide on $\mathcal H = {\mbox {Ker}} \, \theta$. A differential form $\alpha$ on
$M$ is ${\mathcal H}$-{\em related} to a differential form $\alpha_W$ on $W$,
$\alpha \sim_{\mathcal H} \alpha_W$, if their pull-backs to $P$ coincide on $\mathcal H$.

Let now $A_M  \cong A^m $  be a connected  abelian  group acting on $M$ in such a way
that there is a smooth twist $W$ given via  curvature  $F \in \Omega^2 (M, {\mathfrak  a}_P)$
and invertible lifting function $a \in \Omega^0 (M, {\mathfrak a}_M \otimes {\mathfrak a}_P^*)$,
where  ${\mathfrak  a}_M$  (respectively ${\mathfrak  a}_P$)  denotes the Lie algebra of  $A_M $ (respectively $A_P \cong A^m$).

If the twist $W$ has a Hermitian structure $(J_W, g_W, \Omega_W)$ induced by the one $(J, g, \Omega)$ on $M$, one has
for the fundamental $2$-forms and for the torsion $3$-forms of the Bismut connections the following relations
$$
\Omega_W   \sim_{\mathcal H} \Omega, \quad c_W   \sim_{\mathcal H} c - a^{-1
} J_M  F \wedge \xi^{b},
$$
where $\xi: {\mathfrak a}_M \to \chi (M)$ is the infinitesimal action and  $\xi^b$ is the dual of $\xi$ by using the metric.
In the case $F$ is of instanton type, i.e.  if $F$ is of type $(1,1)$, one has that
$$
d c_W \sim_{\mathcal H} dc - a^{-1} F \wedge [i_{\xi} c + d \xi^b - g(\xi, \xi) a^{-1} F].
$$
Swann obtained with the instanton construction new examples of simply-connected SKT manifolds.
One can adapt the  previous construction in the case of  complex manifolds endowed  with a $1$-st Gauduchon metric.

Let $(N^{2n}, J)$ be a  simply-connected compact
complex manifold of complex dimension $n \geq 2$ with a $J$-Hermitian metric $g_{N^{2n}}$ which
is $1$-st Gauduchon.
Consider the product $M^{2n+2} = N^{2n} \times \T^2$, where $\T^2$ is a $2$-torus with an invariant K\"ahler structure.
Then $M^{2n+2}$ has a $1$-st Gauduchon metric $g_{M^{2n+2}} = g_{N^{2n}} + g_{\T^2}$
with torsion $3$-form $c$ supported on $N^{2n}$.
\newline
Let $\xi$ be
the torus action on the $\T^2$ factor. We have then ${\mathfrak a}_M \cong {\mathfrak a}_P \cong \R^2$,
 $$
 i_{\xi} c =0, \quad d \xi^b =0
 $$
 and $a$ is a constant isomorphism ${\mathfrak a}_M \to {\mathfrak a}_P$.
 Since
 $$
 \Omega_W^{n - 1}  \sim_{\mathcal H} (\Omega_{N^{2n}} + \Omega_{\T^2})^{n -1}  =   \Omega_{N^{2n  }}^{n -1} + (n -1)  \Omega_{N^{2n}}^{n -2}  \wedge \Omega_{\T^2}
 $$
 we get
 $$
(d c_W  \wedge   \Omega_W^{n - 1} ) \sim_{\mathcal H}  [dc + g(\xi, \xi) a^{-1} F \wedge  a^{-1} F] \wedge  [\Omega_{N^{2n}}^{n -1} + (n -1)  \Omega_{N^{2n - 2}}^{n -2}  \wedge \Omega_{\T^2}
]
$$
 and thus by using the assumption that $g_{N^{2n}}$ is $1$-st Gauduchon we obtain
 $$
(d c_W  \wedge   \Omega_W^{n - 1} ) \sim_{\mathcal H}  g(\xi, \xi) a^{-1} F \wedge  a^{-1} F \wedge   (n -2)  \Omega_{N^{2n }}^{n -2}  \wedge \Omega_{\T^2}.
$$
Assume that there are two linearly independent integral closed $(1,1)$-forms $F_i \in \Lambda^{1,1}_{\Z} (N^{2n})$,
$i = 1,2$, with $[F] \in H^2 (N^{2n}, \Z)$.
If  for $n =2$
$$
\left (\sum_{i,j=1}^2 \gamma_{ij}  F_i \wedge F_j \right )  <0
$$
or  for  $n > 3$
$$
\left (\sum_{i,j=1}^2 \gamma_{ij}  F_i \wedge F_j \right )  \wedge  \Omega_{N^{2n }}^{n -2}  =0
$$
for some positive definite
matrix $(\gamma_{ij}) \in M_2 (\R)$, then by \cite[Proposition 4.5]{Sw} there is a compact simply
connected $\T^2$-bundle $\tilde W$ over $N^{2n}$ whose total space admits a $1$-st Gauduchon metric.
Note that the condition $$ \sum_{i,j=1}^2 \gamma_{ij}  F_i \wedge F_j  =0$$ is equivalent to the
condition that $1$-st Gauduchon metric on $\tilde W$  is SKT.

The manifold $\tilde W$ is the universal covering
of the twist $W$ of $N^{2n} \times \T^2$, where the K\"ahler flat
metric over $\T^2 = \C/\Z^2$ is given by the matrix $(\gamma_{ij})$
with respect to the standard generators with a compatible complex
structure and
topologically $W$ is a principal torus bundle over $N^{2n}$ with Chern classes $[F_i]$.

\begin{example} {\rm Consider as  in \cite[Example 4.6]{Sw}  a simply-connected projective K\"ahler manifold of real dimension $4$ and fix an imbedding of $N_0$ in  ${\mathbb {C P}}^r$.  Then a generic  linear subspace $L$ of  complex dimension $r -2$ in ${\mathbb {C P}}^r$
intersects transversely $N_0$ in a finite number of points $p_1, \dots, p_d$. 
One may choose homogeneous coordinates  $[z_0, \dots, z_r]$ on ${\mathbb {C P}}^r$ in such a way $L$ is defined by $z_0 = z_1 =0$. Let now $\tilde {{\mathbb {C P}}^r} \subset  {\mathbb {C P}}^r \times {\mathbb {C P}}^1$
be the complex blow-up of ${\mathbb {C P}}^r$ along $L$, and denote by $\pi_1$ and $\pi_2$ respectively the projections on the first and on the second factor of ${\mathbb {C P}}^r \times {\mathbb {C P}}^1$. As shown in \cite[Example 4.6]{Sw} if we define $N_1 = \pi^{-1} (N_0)$, then $f \vert_{N_1}$ defines a $(1,1)$-form $F_1$ on $N_1$ such that the cohomology class $[F_1]$  is non-zero and $F_1^2 =0$. If one iterates this construction one gets another  non-zero  cohomology class $[F_2]$  such that $F_2^2 =0$. Now, since $F_1 \wedge F_2 \neq 0$, one can choose a positive matrix  $(\gamma_{ij}) \in M_2 (\R)$ such  that $\sum_{i,j=1}^2 \gamma_{ij} F_i \wedge  F_j <0$. Then, by applying Proposition~\ref{curve} and Corollary~10 of \cite{FWW},
one gets a $1$-st Gauduchon metric on the universal covering
of the twist of $N_1 \times \T^2$.
}
\end{example}

\vskip1cm

\noindent {\bf Acknowledgments.}
We would like to thank S. Ivanov for useful comments and suggestions about the proof of Theorem~\ref{conform-balanced}.
This work has been partially supported through Project MICINN (Spain) MTM2008-06540-C02-02,
Project MIUR ``Riemannian Metrics and Differentiable Manifolds" and by GNSAGA of INdAM.

\smallskip

{\small

\end{document}